
\documentclass[12pt]{article}
\usepackage{graphicx}
\usepackage{amssymb, amsthm, amsmath, amsfonts}
\usepackage{mathrsfs,microtype}
\usepackage{epstopdf, color}
\usepackage{tikz}
\usepackage{hyperref}

\usetikzlibrary{arrows,decorations.pathmorphing,decorations.footprints,fadings,calc,trees,mindmap,shadows,decorations.text,patterns,positioning,shapes,matrix,fit,through,decorations.pathreplacing}
\usepackage[margin=1in]{geometry}

\theoremstyle{plain}
\newtheorem{theorem}{Theorem}[section]
\newtheorem{lemma}[theorem]{Lemma}
\newtheorem{corollary}[theorem]{Corollary}

\newtheorem{question}[theorem]{Question}

\theoremstyle{definition}
\newtheorem{definition}[theorem]{Definition}

\theoremstyle{remark}
\newtheorem*{remark}{Remark}

\newcommand{\calT}{\mathcal{T}}
\newcommand{\calC}{\mathcal{C}}

\title{Extremal $H$-colorings of trees and $2$-connected graphs}
\author{John Engbers\thanks{john.engbers@marquette.edu; Department of Mathematics, Statistics and Computer Science, Marquette University, Milwaukee, WI 53201. Research supported by the Simons Foundation and by a Marquette University Summer Faculty Fellowship} \and David Galvin\thanks{dgalvin1@nd.edu; Department of Mathematics,
University of Notre Dame, Notre Dame IN 46556. Research supported by NSA grant H98230-13-1-0248, and by the Simons Foundation.}}
\date{\today }

\begin{document}

\maketitle

\begin{abstract}
For graphs $G$ and $H$, an {\em $H$-coloring} of $G$ is an adjacency preserving map from the vertices of $G$ to the vertices of $H$.  $H$-colorings generalize such notions as independent sets and proper colorings in graphs. There has been much recent research on the extremal question of finding the graph(s) among a fixed family that maximize or minimize the number of $H$-colorings.
In this paper, we prove several results in this area.

First, we find a class of graphs ${\mathcal H}$ with the property that for each $H \in {\mathcal H}$, the $n$-vertex tree that minimizes the number of $H$-colorings is the path $P_n$.
We then present a new proof of a theorem of Sidorenko, valid for large $n$, that for {\em every} $H$ the star $K_{1,n-1}$ is the $n$-vertex tree that maximizes the number of $H$-colorings.
Our proof uses a stability technique which we also use to show that for any non-regular $H$ (and certain regular $H$) the complete bipartite graph $K_{2,n-2}$ maximizes the number of $H$-colorings
of $n$-vertex $2$-connected graphs.
Finally, we show that the cycle $C_n$ has the most proper $q$-colorings among all $n$-vertex $2$-connected graphs.
\end{abstract}

\section{Introduction and statement of results}

For a simple loopless graph $G = (V(G),E(G))$ and a graph $H = (V(H),E(H))$ (possibly with loops, but without multi-edges), an {\em $H$-coloring} of $G$ is an adjacency-preserving map $f:V(G) \to V(H)$ (that is, a map satisfying $f(x) \sim_H f(y)$ whenever $x\sim_G y$). Denote by $\hom(G,H)$ the number of $H$-colorings of $G$.

The notion of $H$-coloring has been the focus of extensive research in recent years. Lov\'asz's monograph \cite{Lovasz} explores natural connections to graph limits, quasi-randomness and property testing. Many important graph notions can be encoded via homomorphisms --- for example, {\em proper $q$-coloring} using $H=K_q$ (the complete graph on $q$ vertices), and {\em independent} (or {\em stable}) {\em sets} using $H=H_{\text{ind}}$ (an edge with one looped endvertex). The language of $H$-coloring is also ideally suited for the mathematical study of hard-constraint spin models from statistical physics (see e.g. \cite{BrightwellWinkler}). Particularly relevant to the present paper is the Widom-Rowlinson model of the occupation of space by $k$ mutually repulsive particles, which is encoded as an $H$-coloring by using the graph $H = H_{\rm WR}(k)$ which has loops on every vertex of $K_{1,k}$ (the star on $k+1$ vertices). Note that the original Widom-Rowlinson model \cite{WidomRowlinson} has $k=2$.

Many authors have addressed the following extremal enumerative question for $H$-coloring: given a family ${\mathcal G}$ of graphs, and a graph $H$, which $G \in {\mathcal G}$ maximizes or minimizes ${\rm hom}(G,H)$? This question can be traced back to Birkhoff's attacks on the 4-color theorem, but recent attention on it owes more to Wilf and (independently) Linial's mid-1980's query as to which $n$-vertex, $m$-edge graph admits the most proper $q$-colorings (i.e. has the most $K_q$-colorings). For a survey of the wide variety of results and conjectures on the extremal enumerative $H$-coloring question, see \cite{Cutler}.

A focus of the present paper is the extremal enumerative $H$-coloring question for the family $\calT(n)$, the set of all trees on $n$ vertices. This family has two natural candidates for extremality, namely the path $P_n$ and the star $K_{1,n-1}$, and indeed in \cite{ProdingerTichy} Prodinger and Tichy showed that these two are
extremal for the count of independent sets in trees: for all $T \in \calT(n)$,
\begin{equation} \label{inq-PT}
\hom(P_n,H_{\text{ind}}) \leq \hom(T,H_{\text{ind}}) \leq \hom(K_{1,n-1},H_{\text{ind}}).
\end{equation}
The Hoffman-London matrix inequality (see e.g. \cite{Hoffman,London,Sidorenko2}) is equivalent to the statement that $\hom(P_n,H) \leq \hom(K_{1,n-1},H)$ for \emph{all} $H$ and $n$; a significant generalization of this due to Sidorenko \cite{Sidorenko} (see also \cite{CsikvariLin} for a short proof) shows that the right-hand inequality of (\ref{inq-PT}) extends to arbitrary $H$.
\begin{theorem}[Sidorenko] \label{thm-siderenko}
Fix $H$ and $n \geq 1$.  Then for any $T \in \calT(n)$,
\[
\hom(T,H) \leq \hom(K_{1,n-1},H).
\]
\end{theorem}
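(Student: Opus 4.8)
The plan is to reduce the statement to a one-parameter strengthening that can be driven through an induction on $n$. First I would record the exact value of the right-hand side: an $H$-coloring of $K_{1,n-1}$ is determined by the image $v$ of the centre together with an arbitrary choice, for each of the $n-1$ leaves, of a neighbour of $v$ in $H$, so that
\[
\hom(K_{1,n-1},H)=\sum_{v\in V(H)} d(v)^{\,n-1},
\]
where $d(v)$ is the number of neighbours of $v$ in $H$ (a loop at $v$ counting $v$ itself). More generally, for a tree $S$ rooted at a vertex $u$ I would work with the vector of \emph{rooted} counts $c^{S}_w=\#\{f\colon S\to H \text{ a homomorphism}: f(u)=w\}$; these satisfy the message-passing recursion in which attaching a pendant edge multiplies by the adjacency operator $A$ of $H$, and $\hom(S,H)=\sum_w c^S_w$.

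The heart of the plan is to prove, by induction on $k=|V(S)|$, the following family of inequalities indexed by a real parameter $t\ge 0$: for every tree $S$ on $k$ vertices rooted at any $u$,
\[
\sum_{w\in V(H)} c^{S}_w\, d(w)^{t}\ \le\ \sum_{w\in V(H)} d(w)^{\,k-1+t}.
\]
The theorem is the case $t=0$, where the left-hand side is $\hom(S,H)$ and is independent of the root. The passage to real $t$ is what makes the induction possible: for \emph{integer} $t$ the inequality merely restates the theorem for the larger tree obtained by hanging $t$ extra leaves at $u$, so it cannot drive a downward induction, whereas non-integer exponents carry genuinely new content. Two parts of the inductive step are then immediate. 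The base case $k=1$ is an equality; and if the deleted leaf is adjacent to the root, then $c^{S}_w=d(w)\,c^{S'}_w$ for the smaller tree $S'$, so the left-hand side at parameter $t$ equals that of $S'$ at parameter $t+1$, and the induction closes under the shift $t\mapsto t+1$. In particular the theorem for an $n$-vertex tree $T$ follows by deleting any leaf $\ell$, rooting $T$ at its neighbour $z$, and applying the rooted inequality to $(T-\ell,z)$ at $t=1$.

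The main obstacle I expect is the remaining case, where every deletable leaf lies at distance at least two from the chosen root; equivalently, one must control what happens when a pendant edge is attached at a \emph{descendant} $z$ of the root rather than at the root itself. Attaching at $z$ reweights each homomorphism of the smaller tree by $d(f(z))$ \emph{inside} the sum defining $c^{S}_w$, instead of extracting a clean factor $d(w)$, and the content of the theorem is precisely that attachment at the root is the extremal (largest) choice. I anticipate this reduces to a convexity/rearrangement inequality for the operator $A$ — using $d=A\mathbf 1$ together with the expectation that the message vectors are positively correlated with $d$ along the tree — rather than to any single combinatorial swap. Indeed one should not hope to prove monotonicity of a naive local move such as relocating a pendant star from a vertex to its parent: the sign of the resulting change in $\hom(\cdot,H)$ genuinely depends on $H$ (it can be negative already when $H=K_{1,s}$ with $s\ge 2$), so a global analytic estimate, not a term-by-term comparison, is what is required. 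The same convexity estimate would be the crux in the alternative approach of transforming $T$ into $K_{1,n-1}$ by a sequence of shifts that do not decrease the number of colorings.
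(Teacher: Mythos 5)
Your proposal is not a proof: the steps you complete are formal bookkeeping, and the step you defer as ``the main obstacle'' is the entire theorem. Concretely, the only inductive move you verify --- peeling a leaf adjacent to the root, via $c^{S}_w=d(w)c^{S'}_w$ and the shift $t\mapsto t+1$ --- never creates new leaf-children of the root, since deleting one leaf-child of $u$ does not affect whether the other children of $u$ have descendants. So your induction runs to completion precisely when every non-root vertex of $S$ is a leaf-child of the root, i.e.\ precisely when $S$ is a star rooted at its center, in which case the rooted inequality is an identity. For every other rooted tree you halt at the unproven case, and that case is not a small remainder: already for $T=P_5$ your reduction asks for $\sum_w d(w)\,c^{P_4}_w\le\sum_w d(w)^4$ with $P_4$ rooted at an endpoint, which \emph{is} the full path-versus-star (Hoffman--London) inequality for $n=5$. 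Note also that your completed steps never use a non-integer value of $t$ (you enter at $t=1$ and the shift preserves integrality), so the real-parameter strengthening you advertise as ``what makes the induction possible'' is never actually exploited; the obstruction is not integrality of $t$ but the location of removable leaves. Your setup is sound --- the formula $\hom(K_{1,n-1},H)=\sum_v d(v)^{n-1}$ is right, the reduction to the rooted inequality is valid, and your caution that a naive local move (relocating a pendant star to the parent) can decrease the count even for $H=K_{1,s}$ is correct --- but the convexity/rearrangement input you would need is exactly where the content of Sidorenko's theorem lives, and none is supplied. For what it is worth, the simplest missing instance, $\sum_{(w,x):\,w\sim x} d(w)^t d(x)\le\sum_w d(w)^{t+2}$, does follow from weighted AM--GM, since $d(w)^t d(x)\le\tfrac{t}{t+1}d(w)^{t+1}+\tfrac{1}{t+1}d(x)^{t+1}$ summed over ordered adjacent pairs gives it; but the analogous statements for trees of depth two or more do not reduce to a single such application, and no mechanism for them is proposed.

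You should also be aware that the paper itself does not prove Theorem \ref{thm-siderenko} in this generality: it is quoted from Sidorenko \cite{Sidorenko}, with \cite{CsikvariLin} cited for a short proof. What the paper proves (Theorem \ref{thm-maxfortrees}, Section \ref{sec-maxfortrees}) is the weaker statement for $n\ge c_H$, by a stability argument entirely unlike yours: for non-regular $H$, Perron--Frobenius gives $\hom(P_k,H)<\Delta^{k-2}$ for long paths (Lemma \ref{lem-no long paths}), so any near-extremal tree has a vertex $v$ of degree at least $n^{c_2}$, and a direct case analysis on the colors of $v$ and a suitable neighbor then shows every tree other than $K_{1,n-1}$ falls strictly below $|V_{=\Delta}|\Delta^{n-1}\le\hom(K_{1,n-1},H)$. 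So your route, if it could be completed, would be genuinely different from the paper's and stronger (valid for all $n$, like Sidorenko's original theorem); but completing it requires proving the deferred inequality, which is the heart of the matter rather than a technical loose end.
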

In other words, the star admits not just the most independent sets among $n$-vertex trees, but also the most $H$-colorings for arbitrary $H$. Two points are worth noting here. First, since deleting edges in a graph cannot decrease the number of $H$-colorings, Theorem \ref{thm-siderenko} shows that among all connected graphs on $n$ vertices, $K_{1,n-1}$ admits the most $H$-colorings. Second, if we extend $\calT(n)$ instead to the family of graphs on $n$ vertices with minimum degree at least $1$ and consider even $n$, then as shown by the first author \cite{Engbers} the number of $H$-colorings is maximized either by the star or by the graph consisting of a union of disjoint edges.

The left-hand side of (\ref{inq-PT}) turns out {\em not} to generalize to arbitrary $H$: Csikv\'{a}ri and Lin \cite{CsikvariLin}, following earlier work of Leontovich \cite{Leontovich}, exhibit a (large) tree $H$ and a tree $E_7$ on seven vertices such that $\hom(E_7,H) < \hom(P_7,H)$, and raise the natural question of characterizing those $H$ for which $\hom(P_n,H) \leq \hom(T,H)$ holds for all $n$ and all $T \in \calT(n)$.

Our first result gives a partial answer to this question. Before stating it, we need to establish a convention concerning degrees of vertices in graphs with loops.

\medskip

\noindent \textbf{Convention:} For all graphs in this paper, the degree of a vertex $v$ is the number of neighbors
of
$v$, i.e., $d(v) = |\{w : v \sim w\}|$. In particular, a loop on a vertex
adds one to the degree.  We let $\Delta$ denote the maximum degree of $H$.

We also let $G^{\circ}$ denote the graph obtained from $G$ by adding loops to every vertex in $G$.

\begin{theorem} \label{thm-minfortrees}
Let $n\geq 1$ and let $H$ be a regular graph.  For an integer $\ell \geq 1$, let $H^{\circ}(\ell)$ be the join of $H$ and $K_{\ell}^{\circ}$. 
Then for any $T \in \calT(n)$,
\[
\hom(P_n,H^{\circ}(\ell)) \leq \hom(T,H^{\circ}(\ell)).
\]
Equality occurs if and only if $T = P_n$ or $H^{\circ}(\ell) = K_{q}^{\circ}$ for some $q \geq \ell$. 
\end{theorem}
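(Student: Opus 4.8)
The plan is to turn the count $\hom(T,H^\circ(\ell))$ into an explicit two‑variable partition function on $T$ and then to show that a local "leg‑merging" move never increases it, with $P_n$ as the unique sink. First I would record the structural feature driving everything: every vertex of the $K_\ell^\circ$‑part of $H^\circ(\ell)$ is adjacent, via the join together with its own loop and the loops of $K_\ell^\circ$, to \emph{every} vertex of $H^\circ(\ell)$ (itself included), so such a vertex is totally unconstrained in any coloring. Classifying an $H^\circ(\ell)$‑coloring $f$ of a graph $G$ by the set $S=f^{-1}(V(K_\ell^\circ))$ therefore gives
\[
\hom(G,H^\circ(\ell))=\sum_{S\subseteq V(G)}\ell^{|S|}\,\hom\bigl(G[V(G)\setminus S],H\bigr).
\]
Writing $m=|V(H)|$ and letting $d$ be the regularity of $H$ (with the loop convention), deleting a leaf shows that for every forest $F$ one has $\hom(F,H)=m^{\,c(F)}d^{\,|V(F)|-c(F)}$, where $c(F)$ is the number of components: given the image of a leaf's neighbour, the leaf has exactly $d$ admissible images. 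Combining these and using $|I|-e(T[I])=c(T[I])$ for the induced forest $T[I]$ yields, for every $T\in\calT(n)$,
\[
\hom(T,H^\circ(\ell))=\sum_{I\subseteq V(T)}\ell^{\,n-|I|}\,m^{\,|I|-e(T[I])}\,d^{\,e(T[I])}.
\]
When $d=m$ (equivalently $H=K_m^\circ$, so $H^\circ(\ell)=K_{m+\ell}^\circ$) each summand is $\ell^{\,n-|I|}m^{|I|}$ and the sum equals $(\ell+m)^n$ regardless of $T$; this is the stated equality case, and henceforth I assume $d<m$.

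Next I would compute this formula recursively and isolate the right invariant. Rooting $T$ and giving each vertex a state recording whether it lies in $V(H)$ or in $V(K_\ell^\circ)$, a standard transfer computation of the last display attaches to each rooted subtree a pair $(A_v,B_v)$ (the weighted sub‑sums with the root outside, resp.\ inside, $I$) satisfying $A_v=\ell\prod_{c}(A_c+B_c)$ and $B_v=m\prod_{c}(A_c+tB_c)$ with $t=d/m$, leaf value $(\ell,m)$, and $\hom(T,H^\circ(\ell))=A_{\mathrm{root}}+B_{\mathrm{root}}$. The quantities a parent actually reads off are $P_v=A_v+B_v$ and $Q_v=A_v+tB_v$; for the path on $j$ vertices rooted at an endpoint these obey $(P_j,Q_j)^{\top}=N^{\,j}(1,1)^{\top}$ with $N=\left(\begin{smallmatrix}\ell&m\\ \ell&d\end{smallmatrix}\right)$.

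The reduction to a single move is then as follows: if $T\ne P_n$, root at a leaf and pick a vertex $v$ of degree $\ge3$ of maximum depth, so that all its child‑subtrees are pendant paths ("legs"); form $T'$ by merging two legs, on $a$ and $b$ vertices, into one leg on $a+b$ vertices. Rooting at $v$ and letting $\Pi_P,\Pi_Q$ be the products of $P,Q$ over the remaining children,
\[
\hom(T,H^\circ(\ell))-\hom(T',H^\circ(\ell))=\ell\,\Pi_P\,(P_aP_b-P_{a+b})-m\,\Pi_Q\,(Q_{a+b}-Q_aQ_b).
\]
The crux is the identity $\ell\,(P_aP_b-P_{a+b})=m\,(Q_{a+b}-Q_aQ_b)$. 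Expanding $N^{a+b}=N^aN^b$ against $(P_j,Q_j)^{\top}=N^j(1,1)^{\top}$ gives $P_aP_b-P_{a+b}=(N^a)_{12}(P_b-Q_b)$ and $Q_{a+b}-Q_aQ_b=(N^a)_{21}(P_b-Q_b)$, so the identity is exactly $\ell(N^a)_{12}=m(N^a)_{21}$; this holds because $N$ is symmetrizable, $\mathrm{diag}(\sqrt\ell,\sqrt m)\,N\,\mathrm{diag}(\sqrt\ell,\sqrt m)^{-1}=\left(\begin{smallmatrix}\ell&\sqrt{\ell m}\\ \sqrt{\ell m}&d\end{smallmatrix}\right)$ is symmetric, hence so are its powers. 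Substituting, the difference collapses to $\ell\,(P_aP_b-P_{a+b})(\Pi_P-\Pi_Q)\ge0$, since $(N^a)_{12}\ge0$ and $P_b-Q_b=(1-t)B_b\ge0$ force $P_aP_b\ge P_{a+b}$, while $\Pi_P\ge\Pi_Q$ because $P\ge Q$ on every subtree. Thus merging never increases the count, and it strictly decreases it when $d<m$ and $v$ has another child (always true, via the parent edge), because then $(N^a)_{12}>0$, $B_b>0$, and $\Pi_P>\Pi_Q$. As each merge strictly lowers the number of leaves while preserving $n$, iterating transforms $T$ into $P_n$ through strictly decreasing values, giving both the inequality and the equality characterization.

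The main obstacle, which this plan is organized around, is that the merge does \emph{not} move the message $(A_v,B_v)$ monotonically: it decreases $A_v$ but increases $B_v$, so no coordinatewise comparison of messages can succeed, and the naive difference above has indeterminate sign. The entire force of the argument is concentrated in the symmetrizability identity $\ell(N^a)_{12}=m(N^a)_{21}$, which is exactly what cancels the $\Pi_P$ versus $\Pi_Q$ mismatch and rewrites the difference in the manifestly nonnegative form $\ell(P_aP_b-P_{a+b})(\Pi_P-\Pi_Q)$; verifying that identity (and the accompanying positivity $(N^a)_{12}>0$, $P_b>Q_b$ for $d<m$) is the step I expect to require the most care.
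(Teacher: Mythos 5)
Your proof is correct, and it takes a genuinely different route from the paper's. The paper strengthens the statement to forests and inducts on $n$: deleting a leaf $x$ with neighbor $y$ and using the regularity of $H$ plus the dominating looped vertices yields the exact recursion $\hom(F,H^{\circ}(\ell)) = (q-\Delta)\hom(F-x-y,H^{\circ}(\ell)) + (\Delta+\ell)\hom(F-x,H^{\circ}(\ell))$, after which the inequality follows by induction, with the base cases $n \leq 4$ and the equality analysis outsourced to the (strict) Hoffman--London inequality. You instead expand $\hom(T,H^{\circ}(\ell))$ as an explicit partition function $\sum_{I\subseteq V(T)}\ell^{n-|I|}m^{|I|-e(T[I])}d^{e(T[I])}$ over the set $I$ of vertices colored into $H$, run a two-state transfer recursion up the tree, and show that merging two pendant legs at a deepest branch vertex strictly decreases the count (when $d<m$), via the symmetrization identity $\ell(N^a)_{12}=m(N^a)_{21}$, which collapses the difference to the manifestly nonnegative $\ell(P_aP_b-P_{a+b})(\Pi_P-\Pi_Q)$; iterating the compression reaches $P_n$. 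I checked the algebra (the recursions $A_v=\ell\prod_c(A_c+B_c)$, $B_v=m\prod_c(A_c+tB_c)$, the identities $P_aP_b-P_{a+b}=(N^a)_{12}(P_b-Q_b)$ and $Q_{a+b}-Q_aQ_b=(N^a)_{21}(P_b-Q_b)$, and the positivity claims, including $\Pi_P>\Pi_Q$ via the parent-side subtree guaranteed by $\deg(v)\geq 3$) and it is sound; the $d=m$ degeneracy correctly recovers the $K_q^{\circ}$ equality case. What each approach buys: the paper's induction is shorter but leans on an external input (Hoffman--London, and its strict version from Sidorenko for uniqueness), and it needs the forest strengthening to close the induction; your argument is entirely self-contained, produces a strictly decreasing chain of local moves terminating at $P_n$ (so the uniqueness of the minimizer comes for free rather than from a separate equality analysis), and the leg-merging/compression technique is portable to other weighted tree-counting problems, at the cost of a longer setup.
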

Notice that the result also holds for $H$ where each component is of the form $H^{\circ}(\ell)$. Theorem \ref{thm-minfortrees} generalizes the left-hand side of (\ref{inq-PT}), as $H_{\text{ind}}$ is the join of $K_1$ and $K_{1}^{\circ}$, 
and our proof is a generalization of the inductive approach used in \cite{ProdingerTichy}.

Since the Widom-Rowlinson graph $H_{\rm WR}(k)$ can be constructed from the disjoint union of $k$ looped vertices by the addition of a single looped dominating vertex, an immediate corollary of Theorem \ref{thm-minfortrees} is that for all $n \geq 1$,
\[
\hom(P_n,H_{\rm WR}(k)) \leq \hom(T,H_{\rm WR}(k)).
\]
for all $T \in \calT(n)$. We also note in passing that Theorem \ref{thm-minfortrees} may be interpreted in terms of partial $H$-colorings of $G$ (that is, $H$-colorings of induced subgraphs of $G$ that need not be extendable to $H$-colorings of $G$). Specifically, when $\ell=1$ the theorem says that if $H$ is regular, then among all $n$-vertex trees none admits fewer partial $H$-colorings than $P_n$.

Our second result is a new proof of Sidorenko's theorem, valid for sufficiently large $n$.
\begin{theorem} \label{thm-maxfortrees}
There is a constant $c_H$ such that if $n \geq c_H$ and $T \in \calT(n)$ then
\[
\hom(T,H) \leq \hom(K_{1,n-1},H)
\]
with equality if and only if $H$ is regular or $T = K_{1,n-1}$.
\end{theorem}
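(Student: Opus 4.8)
The plan is to start from an exact identity obtained by peeling the leaves, and then prove the resulting inequality by a stability analysis driven by the Perron--Frobenius spectrum of $H$. Write $I$ for the set of non-leaf vertices of $T$ and $\ell(y)$ for the number of leaves hanging off $y\in I$. Conditioning on the colours of the internal vertices, and using that each leaf at $y$ may independently take any of the $d_H(g(y))$ neighbours of $g(y)$, gives
\[
\hom(T,H)=\sum_{\substack{g:I\to V(H)\\ g\text{ a hom.\ of }T[I]}}\ \prod_{y\in I} d_H(g(y))^{\ell(y)},
\]
which for the star ($|I|=1$) is exactly $\hom(K_{1,n-1},H)=\sum_{c\in V(H)}d_H(c)^{n-1}$. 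Setting $D=\{c:d_H(c)=\Delta\}$ and $m=|D|$, this equals $m\Delta^{n-1}\big(1+O((\Delta'/\Delta)^{n-1})\big)$, where $\Delta'<\Delta$ is the second largest degree, so the target value is $\Theta(\Delta^{n-1})$ with leading constant $m$. Since a connected $T$ maps into a single component of $H$, and since a $\Delta$-regular host contributes exactly $|V(H)|\Delta^{n-1}$ colourings to every $n$-vertex tree, I would first reduce to $H$ connected and non-regular and prove strict inequality there; the regular case then yields equality for all $T$ by the same computation, and one sums over components in general.

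The theorem now amounts to the single inequality $\sum_{g}\prod_{y}d_H(g(y))^{\ell(y)}\le m\Delta^{n-1}$, strict unless $T=K_{1,n-1}$, which I would attack by reading the identity as a transfer-matrix (weighted homomorphism) count on $T[I]$ with node weights $c\mapsto d_H(c)^{\ell(y)}$ and adjacency couplings. Two complementary phenomena must be quantified. When $T$ is \emph{bushy} (few internal vertices), the dominant $g$ pin the heavily loaded internal vertices into $D$, and the leading coefficient is a weighted homomorphism count on the bounded core; because $H$ is connected and non-regular, some vertex of $D$ has a neighbour outside $D$, so $D$ is not closed and the coefficient is strictly below $m\Delta^{\,|I|-1}$. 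The engine here is a short peeling argument showing $\hom(S,F)<m\,\Delta^{\,|V(S)|-1}$ for every tree $S$ on at least two vertices and every $m$-vertex graph $F$ that is not $\Delta$-regular. When $T$ is \emph{stretched} (bare internal paths, or many internal vertices carrying a single leaf each), the count instead decays geometrically along the internal tree.

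The hard part is making the stretched case precise and uniform, and this is where the large-$n$ hypothesis enters. The relevant growth constants are the spectral radius $\lambda$ of the adjacency matrix $A_H$ (governing bare internal paths) and the spectral radius of the degree-weighted operator $D_HA_H$, $D_H=\mathrm{diag}(d_H(c))$ (governing internal paths carrying one leaf per vertex), with intermediate leaf-loadings interpolating between them. Applying Perron--Frobenius to the irreducible matrices coming from a connected non-regular $H$ yields the strict gaps $\lambda<\Delta$ and a spectral radius of $D_HA_H$ strictly below $\Delta^2$. These gaps should force a bound of the shape $\hom(T,H)\le \mathrm{poly}(n)\,\vartheta^{\,|I|}\,\Delta^{\,n}$ for some $\vartheta<1$, so that once the internal tree is large the count drops below $m\Delta^{n-1}$. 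Establishing these spectral inequalities, and carrying the polynomial bookkeeping so that a single threshold $n\ge c_H$ makes the bushy and stretched estimates beat the star simultaneously, is the technical crux of the argument.

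Combining the two faces, every $T\ne K_{1,n-1}$ satisfies $\hom(T,H)<\hom(K_{1,n-1},H)$ once $n\ge c_H$, the strict loss coming either from the non-closedness of $D$ (bushy $T$) or from a spectral gap (stretched $T$). The equality analysis falls out of the same estimates: equality forces $|I|=1$, that is $T=K_{1,n-1}$, unless the strict inequalities degenerate, which for a connected host happens exactly when $H$ is regular; together with the component reduction this is the stated characterisation.
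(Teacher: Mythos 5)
Your reduction to connected non-regular $H$, your exact leaf-peeling identity, and your ``bushy'' analysis are all sound, and that bushy case is close in spirit to what the paper actually does. The genuine gap is the stretched case: the bound you propose, $\hom(T,H)\le \mathrm{poly}(n)\,\vartheta^{\,|I|}\,\Delta^{n}$ with $\vartheta<1$, is false. Take $H=P_5$ (connected, non-regular, $\Delta=2$, middle vertex $v_3$) and let $T$ be the spider with center $z$ and $a$ legs of length $2$, so $n=2a+1$. Every internal vertex other than the center carries exactly one leaf, so $T$ is ``stretched'' in your sense, with $|I|=a+1=(n+1)/2$; yet the colorings sending $z\mapsto v_3$ alone contribute
\[
\Bigl(\textstyle\sum_{c'\sim v_3}d_H(c')\Bigr)^{a}=4^{a}=\Delta^{n-1},
\]
because both neighbours of $v_3$ have full degree $\Delta$. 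Hence $\hom(T,H)\ge\Delta^{n-1}$, while your claimed bound is $\mathrm{poly}(n)\,\vartheta^{(n+1)/2}\Delta^{n}=o(\Delta^{n-1})$. The underlying problem is that your spectral gaps ($\lambda<\Delta$ for $A_H$, and $\rho(D_HA_H)<\Delta^2$) quantify decay along internal \emph{paths}, but an internal tree of bounded depth (here a star with pendant edges) accumulates no such decay no matter how large $|I|$ is: in a connected non-regular $H$ the quantity $\sum_{c'\sim c}d_H(c')$ can equal $\Delta^2$ exactly at some vertices, and then legs of length two are ``free.''

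So the dichotomy cannot be bushy versus stretched as measured by $|I|$. The dichotomy that works, and the one the paper uses, is: either $T$ contains a path on $k\ge\ell_H$ vertices, in which case Perron--Frobenius gives $\hom(P_k,H)<\Delta^{k-2}$ and coloring that path first, then the rest of $T$ greedily, already beats the star; or $T$ contains no long path, in which case a depth/branching count forces a vertex $v$ of degree at least $n^{c_2}$, and one finishes by a purely local argument at $v$: colorings placing $v$ outside $V_{=\Delta}$ are exponentially few; among the rest one uses a maximum-degree vertex $i$ with a lower-degree neighbour $j$ (which exists by connectivity plus non-regularity) and a neighbour $w$ of $v$ having a neighbour outside $\{v\}\cup N(v)$ (which exists since $T\neq K_{1,n-1}$) to win a constant factor $1/\Delta^2$. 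Your spider is exactly a tree caught by this local case and missed by your spectral one. If you wish to keep your identity-based framework, the fix is to replace ``geometric decay in $|I|$'' by such a local constant-factor loss at a high-degree internal vertex; as it stands, the argument does not cover all trees, so the proposal has a genuine gap.
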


While Theorem \ref{thm-maxfortrees} is weaker than Theorem \ref{thm-siderenko} in that it only holds for $n \geq c_H$, it is noteworthy for two reasons.  Firstly, our proof for non-regular $H$ uses a stability technique --- we show that if a tree is not structurally close to a star (specifically, if it has a long path), then it admits significantly fewer $H$-colorings than the star, and if it is structurally almost a star but has some blemishes, then again it admits fewer $H$-colorings. Secondly, the proof is less tree-dependent than Sidorenko's, and so the ideas used may be applicable in other settings. We illustrate this by considering the extremal enumerative question for $H$-colorings of $2$-connected graphs. Let $\calC_2(n)$ denote the set of $2$-connected graphs on $n$ vertices, and let $K_{a,b}$ be the complete bipartite graph with $a$ and $b$ vertices in the two color sets. Also, for graph $H$ with maximum degree $\Delta$, denote by $s(H)$ the number of ordered pairs $(i,j)$ of vertices of $H$ satisfying $|N(i) \cap N(j)| = \Delta$. The special case $H=H_{\text{ind}}$ of the following was established by Hua and Zhang \cite{HuaZhang}.
\begin{theorem} \label{thm-2connected}
For non-regular connected $H$ there is a constant $c_H$ such that if $n \geq c_H$ and $G \in \calC_2(n)$ then
\[
\hom(G,H) \leq \hom(K_{2,n-2},H)
\]
with equality if and only if $G=K_{2,n-2}$.

For $\Delta$-regular $H$ the same conclusion holds whenever $s(H) \geq 2\Delta^2 + 1$ (when $H$ is loopless and bipartite) or $s(H) \geq \Delta^2+1$ (otherwise).
\end{theorem}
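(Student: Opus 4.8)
The plan is to follow the stability strategy outlined for Theorem \ref{thm-maxfortrees}, adapting it from trees to the family $\calC_2(n)$ of $2$-connected graphs. The key observation is that for a graph $G$ on $n$ vertices, $\hom(G,H)$ is governed at leading order by how the high-degree vertices of $G$ can be mapped into the high-degree ``centers'' of $H$. In $K_{2,n-2}$ the two vertices on the small side can be mapped to any pair $(i,j)$ of vertices of $H$, and then each of the $n-2$ remaining vertices must be sent into $N(i)\cap N(j)$; this yields the exact count $\hom(K_{2,n-2},H) = \sum_{(i,j)} |N(i)\cap N(j)|^{n-2}$, whose dominant term is $s(H)\,\Delta^{n-2}$. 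So the target to beat is essentially $s(H)\Delta^{n-2}$, and the proof must show every other $G \in \calC_2(n)$ falls strictly below this for large $n$.

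First I would establish the upper bound for a general $G \in \calC_2(n)$. The natural device is to fix a small ``core'' of vertices (for instance, two nonadjacent vertices lying on a common cycle, which exist by $2$-connectivity, or more robustly a suitable separating structure) and count $H$-colorings by first coloring the core and then bounding the number of extensions to the remaining $n-O(1)$ vertices. Each remaining vertex, having at least two neighbors already constrained (this is where $2$-connectivity is essential, guaranteeing no vertex is a pendant that could be colored in $\Delta$ ways freely), contributes at most $\max_{(i,j)}|N(i)\cap N(j)| = \Delta$ choices, with the key point that the number of vertices achieving the full $\Delta$ is controlled. This should give a bound of the shape $\hom(G,H) \le (\text{const})\cdot \Delta^{n-2}$, and the stability part is to argue that unless $G$ is exactly $K_{2,n-2}$, either the polynomial-in-$n$ prefactor is strictly smaller than $s(H)$, or else the exponential base drops below $\Delta$ for a positive fraction of vertices, costing a factor that is exponentially small relative to $\Delta^{n-2}$.

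The regular case is where the two hypotheses $s(H)\ge 2\Delta^2+1$ and $s(H)\ge \Delta^2+1$ enter, and I expect this to be the main obstacle. When $H$ is $\Delta$-regular, every vertex of $G$ can be mapped to yield the full $\Delta$ extension factor no matter what the core colors are, so the crude $\Delta^{n-2}$ bound is achieved by many graphs and the leading exponential term no longer distinguishes $K_{2,n-2}$ from its competitors. One must then compare the subleading polynomial prefactors precisely. The dangerous competitor to rule out is a graph that looks like $K_{2,n-2}$ with a slightly different small side or with one extra edge; the prefactor count of such a graph involves the number of pairs $(i,j)$ achieving $|N(i)\cap N(j)|=\Delta$ (namely $s(H)$) weighted against contributions from configurations using a core of size three or a core with an internal edge. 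The conditions on $s(H)$ are exactly what guarantee that the $K_{2,n-2}$ prefactor $s(H)$ dominates these alternative prefactors, which are bounded by expressions on the order of $\Delta^2$ (the $\Delta^2$ versus $2\Delta^2$ split tracking whether $H$ is loopless bipartite, where an extra symmetry or parity constraint doubles the competing count).

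My plan is therefore to prove a clean lemma isolating the extension count: for any $G\in\calC_2(n)$ and any two-vertex core $\{u,v\}$, the number of colorings extending a fixed core coloring $(i,j)$ is at most $|N(i)\cap N(j)|^{n-2}$, with a deficiency whenever $G$ has structure beyond $K_{2,n-2}$. Summing over core colorings recovers the $K_{2,n-2}$ formula as the extremal case, and the stability argument quantifies the strict loss. I would handle the non-regular case first, where the gap between $\Delta$ and the next-largest common-neighborhood size gives exponential separation and makes the argument comparatively soft, then treat the regular case by the more delicate prefactor comparison, invoking the $s(H)$ bounds at precisely the point where competing polynomial terms must be dominated. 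The hardest step will be ruling out the near-extremal regular competitors without an exact formula, and I anticipate needing a careful case analysis of the possible $2$-connected ``near-stars'' (graphs with two dominating vertices) to confirm that $K_{2,n-2}$ strictly wins.
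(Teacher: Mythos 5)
There is a genuine gap, and it sits at the center of your plan: the ``clean lemma'' you propose --- that for \emph{any} $G\in\calC_2(n)$ and any two-vertex core $\{u,v\}$, the number of colorings extending a fixed core coloring $(i,j)$ is at most $|N(i)\cap N(j)|^{n-2}$ --- is false. That bound is only plausible when every non-core vertex is adjacent to both $u$ and $v$, i.e.\ when $G$ essentially contains $K_{2,n-2}$; in a general $2$-connected graph the remaining vertices have no adjacency to the core at all, so $|N(i)\cap N(j)|$ does not constrain them. Concretely, take $H=P_3$ (non-regular, $\Delta=2$) with middle vertex $2$, and $G=C_6$ with core vertices $u,v$ at distance $2$ on the cycle, both colored $1$: then $|N(1)\cap N(1)|^{4}=1$, but there are $2$ extensions (the two walks of length $4$ from $1$ to $1$ in $P_3$). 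Worse, summing your lemma over all core colorings would give $\hom(G,H)\le\hom(K_{2,n-2},H)$ for \emph{every} $2$-connected $G$ and every $H$, contradicting the fact (noted in the paper) that $\hom(C_n,K_3)>\hom(K_{2,n-2},K_3)$ for $n\ge 6$. The missing idea is that one must first prove a \emph{structural} stability statement: unless $\hom(G,H)$ is already exponentially below $\Delta^{n-2}$, the graph $G$ must contain two vertices $w_1,w_2$ with $\Omega(n)$ common neighbors, and only then does a core-counting argument of your type apply (with the additional twist that when the core colors satisfy $|N(i)\cap N(j)|=\Delta$, a path on four or more vertices joining $w_1$ to $w_2$ must be exhibited to extract the strict loss factor $1-1/\Delta^2$). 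Establishing that structure is where all the real work lies: reduction to minimally $2$-connected graphs, Whitney's open ear decomposition together with Dirac's chord-free characterization (giving an ear decomposition whose ears are either long or are $3$-vertex paths hanging off a bounded core), ruling out long paths via the Perron--Frobenius bound $\hom(P_k,H)<\Delta^{k-2}$, bounding the number of long ears by a constant, and then pigeonholing the $3$-vertex ears onto a single pair of endpoints. None of this appears in your proposal, and without it the counting step has nothing to grip.

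You have also misplaced the role of the hypotheses $s(H)\ge 2\Delta^2+1$ (loopless bipartite) and $s(H)\ge\Delta^2+1$ (otherwise). They are not there to dominate ``prefactors'' of near-extremal competitors such as $K_{2,n-2}$ plus an edge; equality cases of that kind are handled separately by an exact argument showing any added edge would force $H$ to be a complete looped graph. Rather, for regular $H$ the path bound of the non-regular case is unavailable (every tree gets exactly $|V(H)|\Delta^{n-1}$ colorings), so long \emph{cycles} must be excluded instead: $\hom(C_k,H)$ equals the sum of $k$-th powers of the adjacency eigenvalues, which for large $k$ is below $(2+\tfrac1{\Delta^2})\Delta^k$ in the loopless bipartite case (where $-\Delta$ is also an eigenvalue) and $(1+\tfrac1{\Delta^2})\Delta^k$ otherwise. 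The thresholds on $s(H)$ are exactly what make the trivial lower bound $s(H)\Delta^{n-2}\le\hom(K_{2,n-2},H)$ beat any graph containing a long cycle --- and, not incidentally, they exclude $H=K_{\Delta,\Delta}$ (with $s=2\Delta^2$) and $H=K_\Delta^\circ$ (with $s=\Delta^2$), the two graphs for which the key path-coloring lemma fails. Your intuition that the bipartite doubling comes from a parity/symmetry effect is correct in spirit (it is the $-\Delta$ eigenvalue), but the comparison being made is against long cycles, not against modified bipartite competitors.
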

For regular $H$ we have $s(H) \geq |V(H)|$ (consider ordered pairs of the form $(i,i)$), so an immediate corollary in this case is that if $|V(H)| \geq 2\Delta^2+1$ (when $H$ is loopless and bipartite) or if $|V(H)| \geq \Delta^2+1$ (otherwise) then for all large $n$ the unique $2$-connected graph with the most $H$-colorings is $K_{2,n-2}$. Note that the bounds on $s(H)$ in the $\Delta$-regular case are tight with respect to the characterization of uniqueness: when $H=K_{\Delta}^{\circ}$ we have $\hom(G,H) = \Delta^n$ for {\em all} $n$-vertex $G$, and here $s(H)=\Delta^2$, and when $H=K_{\Delta, \Delta}$ (a loopless bipartite graph) we have $\hom(G,H) = 2\Delta^n \cdot \textbf{1}_{\{\text{$G$ bipartite}\}}$ for all $n$-vertex connected graphs $G$, and here $s(H)=2\Delta^2$.

Since all $G \in \calC_2(n)$ are connected, the restriction to connected $H$ is natural.  Unlike with Theorem \ref{thm-maxfortrees}, the restriction to non-regular $H$ is somewhat significant here. In particular, there are examples of regular graphs $H$ such that for all large $n$, there are graphs in $\calC_2(n)$ that admit more $H$-colorings than $K_{2,n-2}$ (and so there is no direct analog of Theorem \ref{thm-siderenko} in the world of $2$-connected graphs). One such example is $H=K_3$; it is easily checked that ${\rm hom}(C_n,K_3)>{\rm hom}(K_{2,n-2},K_3)$ for all $n \geq 6$ (where $C_n$ is the cycle on $n$ vertices).  
More generally, we have the following.
\begin{theorem} \label{thm-2conn-cycles}
Let $n \geq 3$ and $q \geq 3$ be given.  Then for any $G \in \calC_2(n)$, 
\[
\hom(G,K_q) \leq \hom(C_n,K_q),
\]
with equality if and only if $G=C_n$, unless $q=3$ and $n=5$ in which case $G=K_{2,3}$ also achieves equality.

The same conclusion holds with $\calC_2(n)$ replaced by the larger family of $2$-edge-connected graphs on $n$ vertices.
\end{theorem}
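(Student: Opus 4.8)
The plan is to prove that among all $n$-vertex $2$-connected (or more generally $2$-edge-connected) graphs, the cycle $C_n$ maximizes the number of proper $q$-colorings. The natural tool is the \emph{chromatic polynomial} $P(G,q) = \hom(G,K_q)$, together with the deletion-contraction recursion and known lower bounds on chromatic polynomials of $2$-connected graphs. My first step would be to record the exact value for the cycle, namely $P(C_n,q) = (q-1)^n + (-1)^n(q-1)$, and to note that any $G \in \calC_2(n)$ with more than $n$ edges contains $C_n$ as a proper subgraph (since a $2$-connected graph has a spanning cycle only in special cases, but certainly has a cycle through any chosen structure), so I will want to control how adding edges to, or restructuring, a graph changes the coloring count relative to the cycle.

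The cleanest route is probably to separate two regimes. First, handle graphs with exactly $n$ edges: a $2$-connected graph on $n$ vertices with $n$ edges is precisely a cycle $C_n$, so there is nothing to prove there beyond identifying the extremal object. Second, handle graphs with $m \geq n+1$ edges. Here I would aim for an inequality of the form $P(G,q) \leq (q-1)^n + (-1)^n(q-1)$ by induction on the number of edges, using deletion-contraction $P(G,q) = P(G-e,q) - P(G/e,q)$ on a well-chosen edge $e$. The subtlety is that $G-e$ and $G/e$ need not remain $2$-connected, so a direct induction inside $\calC_2(n)$ fails; this is exactly why the theorem is stated for the larger family of $2$-edge-connected graphs, which behaves better under edge operations. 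I would set up the induction on the class of $2$-edge-connected graphs, choosing $e$ on a cycle so that $G-e$ stays connected (indeed $2$-edge-connected when $m$ is large enough) and bounding $P(G/e,q)$ from below.

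The key quantitative input I expect to need is a lower bound on $P(H,q)$ for a $2$-edge-connected graph $H$, something like $P(H,q) \geq (q-1)^{|V(H)|} + (-1)^{|V(H)|}(q-1)$ with the cycle as the minimizer --- this is a known companion result (the cycle \emph{minimizes} proper colorings among $2$-connected graphs, attributed in the literature to work on chromatic polynomial bounds), and feeding it into the contraction term $P(G/e,q)$ is what drives the inductive step. The main obstacle, and where I would spend the most care, is the \textbf{equality analysis and the small exceptional cases}. The statement flags $q=3$, $n=5$ (where $K_{2,3}$ ties $C_5$), so the induction cannot give strict inequality uniformly; I would need to track when deletion-contraction is tight, which forces rigidity on the cycle structure of $G$, and then check the finitely many small $(q,n)$ pairs by direct computation of $P(G,q)$ over the handful of relevant $2$-connected graphs. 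Extending from $2$-connected to $2$-edge-connected in the equality characterization requires verifying that no bridgeless but $1$-connected graph sneaks in as an extra extremal example, which I would dispatch by noting that a cut vertex lets one split the coloring count multiplicatively and compare against the cycle directly.
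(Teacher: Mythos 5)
Your proposal reduces, after its first (correct) steps, to a single load-bearing claim, and that claim is false. You need a lower bound on the contraction term in $\hom(G,K_q)=\hom(G-e,K_q)-\hom(G/e,K_q)$, and you propose the ``known companion result'' that the cycle \emph{minimizes} proper $q$-colorings among $2$-edge-connected graphs, i.e.\ $\hom(H,K_q)\geq (q-1)^{|V(H)|}+(-1)^{|V(H)|}(q-1)$. No such result exists, because edges only destroy colorings: dense $2$-connected graphs have far \emph{fewer} colorings than the cycle. Concretely, $\hom(K_4,K_3)=0<18=\hom(C_4,K_3)$, and, more to the point, the failure occurs on exactly the graphs your induction must face: for $G=K_{2,n-2}$ (which is $2$-edge-connected and different from $C_n$), every contraction $G/e$ is $K_{2,n-3}$ plus an edge inside the small class, so $\hom(G/e,K_3)=6$, while your claimed lower bound is $\hom(C_{n-1},K_3)=2^{n-1}+(-1)^{n-1}\cdot 2$. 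The true statement in this direction is that the cycle is the \emph{maximizer} --- that is the theorem itself --- and since the contraction term enters the deletion-contraction identity with a negative sign, an inductively obtained upper bound can never be substituted for it. This is a structural obstruction, not a fixable detail.

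Your fallback for the deletion term is also not available: you propose to choose $e$ so that $G-e$ stays $2$-edge-connected ``when $m$ is large enough,'' but there is no edge-count threshold above which this is possible, since minimally $2$-edge-connected graphs on $n$ vertices can have as many as $2n-4$ edges ($K_{2,n-2}$ again --- and note that the exceptional graph $K_{2,3}$ in the statement is itself such a graph). Monotonicity under edge deletion does legitimately reduce the inequality to minimally $2$-edge-connected $G$, but that class is where the entire difficulty of the theorem lives, and there your scheme has no valid move. The paper's proof attacks precisely this situation with a direct counting argument rather than chromatic-polynomial recursion: any $2$-edge-connected $G\neq C_n$ contains a non-Hamiltonian cycle $C$ of length $\ell$ together with an ear $P$ meeting $C$ only at its endpoints; one colors $C$ exactly ($(q-1)^{\ell}+(-1)^{\ell}(q-1)$ ways), bounds the number of colorings of $P$ with both endpoint colors fixed by $\left((q-1)^2-1\right)(q-1)^{|V(P)|-4}$ (Lemma \ref{lem-col paths}), and colors the remaining vertices with at most $q-1$ choices each; this gives $\hom(G,K_q)<\hom(C_n,K_q)$ whenever $P$ has at least four vertices. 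A short case analysis shows that the only configurations not covered are those containing a $K_{2,3}$ formed by a $4$-cycle and a $3$-vertex ear across it, and a direct computation there yields strict inequality except at $(n,q)=(5,3)$, producing exactly the stated exception $\hom(K_{2,3},K_3)=\hom(C_5,K_3)$. Some argument of this type for the minimally $2$-edge-connected graphs --- or a genuinely new idea replacing it --- is what your outline is missing.
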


In the setting of Theorem \ref{thm-2conn-cycles} it turns out that no extra complications are introduced in moving from $2$-connected to $2$-edge-connected. We do not, however, currently see a way to extend Theorem \ref{thm-2connected} to this larger family. In particular, the key structural lemma given in Corollary \ref{cor-P3} does not generalize to the family of 2-edge-connected graphs. 

The rest of the paper is laid out as follows. The proof of Theorem \ref{thm-minfortrees} is given in Section \ref{sec-minfortrees}. We prove Theorem \ref{thm-maxfortrees} in Section \ref{sec-maxfortrees}, and use similar ideas to then prove Theorem \ref{thm-2connected} in Section \ref{sec-2connected}. The proof of Theorem \ref{thm-2conn-cycles} is given in Section \ref{sec-2conn-cycles}. Finally, we end with a number of open questions in Section \ref{sec-concluding remarks}.

\section{Proof of Theorem \ref{thm-minfortrees}} \label{sec-minfortrees}

Let $H$ be a $\Delta$-regular graph, and let $H^{\circ}(\ell)$ be the join of $H$ and $K_{\ell}^{\circ}$. 
Using induction on $n$, we will show something a little stronger than Theorem \ref{thm-minfortrees}, namely that for any $n$-vertex \emph{forest} $F$, $\hom(F,H^{\circ}(\ell)) \geq \hom(P_n,H^{\circ}(\ell))$, with equality if and only if either $H^{\circ}(\ell)$ is a complete looped graph, or $F=P_n$. We will first prove the inequality, and then address the cases of equality.

For $n \leq 4$, the only forest $F$ on $n$ vertices that is not a subgraph of $P_n$ (and so for which $\hom(F,H^{\circ}(\ell))\geq \hom(P_n,H^{\circ}(\ell))$ is immediate) is $F=K_{1,n-1}$; but in this case the required inequality follows directly from the Hoffman-London inequality \cite{Hoffman,London}.

So now fix $n \geq 5$, and let $F$ be a forest on $n$ vertices. In what follows, for $x \in V(F)$ and $i \in V(H^{\circ}(\ell))$ we write $\hom(F,H^{\circ}(\ell) | x \mapsto i)$ for the number of homomorphisms from $F$ to $H^{\circ}(\ell)$ that map $x$ to $i$.
Also, we'll assume that $H^{\circ}(\ell)$ has $q+\ell$ vertices, say $V(H^{\circ}(\ell)) = \{1,\ldots,q, \ldots,q+\ell\}$, with the looped dominating vertices being $q+1,\ldots,q+\ell$.

Let $x$ be a leaf in $F$, with unique neighbor $y$ (note that we may assume that $F$ has an edge, because the desired inequality is trivial otherwise).
For each $i  \in \{1,\ldots,q+\ell\}$,
\begin{equation} \label{eqn-restrict}
 \hom(F,H^{\circ}(\ell) | x \mapsto i) = \sum_{j \sim i} \hom(F-x,H^{\circ}(\ell) | y \mapsto j),
\end{equation}
which in particular implies $\hom(F,H^{\circ}(\ell) | x \mapsto k) = \hom(F-x,H^{\circ}(\ell))$ if $k \in \{q+1,\ldots,q+\ell\}$.
So
\begin{eqnarray*}
\hom(F,H^{\circ}(\ell)) &=& \sum_{i=1}^{q+\ell} \hom(F,H^{\circ}(\ell) | x \mapsto i) \\
&=& \ell \hom(F-x,H^{\circ}(\ell)) + \sum_{i=1}^{q} \sum_{j\sim i} \hom(F-x,H^{\circ}(\ell) | y \mapsto j)\\
&=& \ell \hom(F-x,H^{\circ}(\ell)) + \sum_{j=q+1}^{q+\ell} q\hom(F-x,H^{\circ}(\ell) | y \mapsto j) \\
&& \qquad + \sum_{j=1}^{q} d_{H}(j) \hom(F-x,H^{\circ}(\ell) | y \mapsto j)\\
&=& \ell \hom(F-x,H^{\circ}(\ell)) + \sum_{j=q+1}^{q+\ell} (q-\Delta) \hom(F-x,H^{\circ}(\ell) | y \mapsto j) \\
&& \qquad + \Delta\sum_{j=1}^{q+\ell} \hom(F-x,H^{\circ}(\ell) | y \mapsto j)\\
&=& (q-\Delta)\hom(F-x-y,H^{\circ}(\ell)) + (\Delta+\ell) \hom(F-x,H^{\circ}(\ell)).
\end{eqnarray*}

If $F=P_n$ then $F-x=P_{n-1}$ and $F-x-y=P_{n-2}$.  Therefore, by induction, we have
\begin{eqnarray*}
\hom(F,H^{\circ}(\ell)) &=& (q-\Delta)\hom(F-x-y,H^{\circ}(\ell)) + (\Delta+\ell)\hom(F-x,H^{\circ}(\ell))\\
&\geq& (q-\Delta)\hom(P_{n-2},H^{\circ}(\ell)) + (\Delta+\ell)\hom(P_{n-1},H^{\circ}(\ell))\\
&=& \hom(P_{n},H^{\circ}(\ell)).
\end{eqnarray*}

When can we achieve equality?  If $H^{\circ}(\ell)$ is the complete looped graph, then we have equality for all $F$. So we may assume that $H^{\circ}(\ell)$ is not a complete looped graph, and that in particular there are $i, j \in V(H^{\circ}(\ell))$ with $i \not \sim j$.
Now consider an $F$ with more than one component, and let $u$ and $v$ be vertices of $F$ in different components. Using the looped dominating vertices of $H^{\circ}(\ell)$ we may construct an $H^{\circ}(\ell)$-coloring of $F$ in which $u$ is colored $i$ and $v$ is colored $j$. This is not a valid $H^{\circ}(\ell)$-coloring of the forest obtained from $F$ by adding the edge $uv$. It follows that there is a tree $T$ on $n$ vertices with $\hom(F,H^{\circ}(\ell)) > \hom(T,H^{\circ}(\ell))$.
The proof of the Hoffman-London inequality ($\hom(K_{1,n-1},H) \geq \hom(P_n,H)$) given in \cite{Sidorenko2} in fact shows strict inequality for $H^{\circ}(\ell)$ that is not a complete looped graph; since strict inequality holds for the base cases $n \leq 4$, the inductive proof therefore gives strict inequality unless $F-x=P_{n-1}$ and $F-x-y=P_{n-2}$, which implies $F=P_n$.


\section{Proof of Theorem \ref{thm-maxfortrees}} \label{sec-maxfortrees}
First, notice that for $\Delta$-regular $H$ we have ${\rm hom}(T,H) = |V(H)| \Delta^{n-1}$ for all $T \in \calT(n)$, as can be seen by fixing the color on one vertex and iteratively coloring away from that vertex. 
Therefore the goal for the remainder of this section is to show that for non-regular $H$ there is a constant $c_H$ such that if $n \geq c_H$ and $T \in \calT(n)$ then
\[
\hom(T,H) \leq \hom(K_{1,n-1},H),
\]
with equality if and only if $T = K_{1,n-1}$.
Recall that a loop in $H$ will count \emph{once} toward the degree of a vertex $v \in V(H)$ and $\Delta$ denotes the maximum degree of $H$.  If $H$ has components $H_1,\ldots,H_s$ then $\hom(G,H)= \hom(G,H_1) + \cdots + \hom(G,H_s)$, so we may assume that $H$ is connected.

Let $V_{=\Delta} \subset V(H)$ be the set of vertices in $H$ with degree $\Delta$.  By coloring the center of the star with a color from $V_{=\Delta}$, we have
\[
\hom(K_{1,n-1},H) \geq |V_{=\Delta}|\Delta^{n-1}.
\]
We will show that if $n \geq c_H$ and $T \neq K_{1,n-1}$, then $\hom(T,H) < |V_{=\Delta}|\Delta^{n-1}$. We use the following lemma, which will also be needed in the proof of Theorem \ref{thm-2connected}.

\begin{lemma}\label{lem-no long paths}
For non-regular $H$ there exists a constant $\ell_H$ such that if $k \geq \ell_H$, then $\hom(P_{k},H) < \Delta^{k-2}$.
\end{lemma}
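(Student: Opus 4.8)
The plan is to pass to the spectral interpretation of $\hom(P_k,H)$. Writing $A$ for the adjacency matrix of $H$ (a symmetric $0/1$ matrix carrying a $1$ on the diagonal at each looped vertex, so that the row sum at $v$ is exactly $d(v)$ under the paper's convention), a homomorphism from $P_k$ to $H$ is precisely a walk of length $k-1$ in $H$, and hence
\[
\hom(P_k,H) = \mathbf{1}^{T} A^{k-1} \mathbf{1},
\]
where $\mathbf{1}$ is the all-ones vector. Since $A$ is symmetric it admits an orthonormal eigenbasis $v_1,\ldots,v_{n_H}$ (with $n_H = |V(H)|$) and real eigenvalues. Letting $\lambda=\lambda(H)$ denote the spectral radius (the largest eigenvalue), I would expand $\mathbf{1}$ in this basis to obtain $\hom(P_k,H) = \sum_i \lambda_i^{k-1}(\mathbf{1}\cdot v_i)^2$. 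Each coefficient $(\mathbf{1}\cdot v_i)^2$ is nonnegative, $\lambda_i^{k-1}\le \lambda^{k-1}$ for every $i$ (because $|\lambda_i|\le\lambda$ and $\lambda\ge 0$), and $\sum_i(\mathbf{1}\cdot v_i)^2 = \|\mathbf{1}\|^2 = n_H$, so
\[
\hom(P_k,H) \le n_H\,\lambda^{k-1}.
\]

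The crux is the comparison between $\lambda$ and $\Delta$. Here I would invoke the standard Perron--Frobenius estimate: the largest eigenvalue of a nonnegative symmetric matrix is at most its maximum row sum, and for an irreducible matrix (i.e.\ connected $H$) equality forces all row sums to be equal. Since the row sum of $A$ at $v$ equals $d(v)$, this gives $\lambda\le\Delta$, with equality only when $H$ is regular. As this lemma is only ever applied to connected $H$ --- and indeed it fails for disconnected $H$ having a $\Delta$-regular component, since such a component already contributes more than $\Delta^{k-2}$ walks --- I would take $H$ connected and non-regular, yielding the strict inequality $\lambda<\Delta$.

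It then remains to observe that
\[
n_H\,\lambda^{k-1} = n_H\,\lambda\left(\frac{\lambda}{\Delta}\right)^{k-2}\Delta^{k-2},
\]
and since $\lambda/\Delta < 1$ the factor $n_H\,\lambda(\lambda/\Delta)^{k-2}$ tends to $0$ as $k\to\infty$. Choosing $\ell_H$ to be any integer exceeding $2 + \log(n_H\lambda)/\log(\Delta/\lambda)$ makes this factor strictly less than $1$ for all $k\ge\ell_H$, giving $\hom(P_k,H)\le n_H\lambda^{k-1} < \Delta^{k-2}$, as desired. The only genuine obstacle is the strict spectral inequality $\lambda<\Delta$, which rests on the non-regularity together with the connectedness of $H$; once that is in hand, the decay of $(\lambda/\Delta)^{k-2}$ is a routine estimate.
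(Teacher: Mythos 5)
Your proof takes essentially the same route as the paper's: interpret $\hom(P_k,H)$ as walks in $H$, bound it by the spectral radius $\lambda$ of the adjacency matrix via Perron--Frobenius, and conclude from $\lambda < \Delta$ for non-regular $H$, with only a cosmetic difference in the middle step (the paper bounds $\max_{i,j}(A^k)_{ij} \leq c_1\lambda^k$ using a positive eigenvector, while you expand $\mathbf{1}$ in an orthonormal eigenbasis to get $\hom(P_k,H) \leq |V(H)|\,\lambda^{k-1}$). Your explicit observation that connectivity of $H$ is needed for the strict inequality $\lambda < \Delta$ (and that the lemma as literally stated fails for disconnected $H$ with a $\Delta$-regular component) is a genuine point that the paper leaves implicit, justified there only by the earlier reduction to connected $H$ in that section.
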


\begin{proof}
Let $A$ denote the adjacency matrix of $H$, which is a symmetric non-negative matrix.  
Since an $H$-coloring of $P_k$ is exactly a walk of length $k$ through $H$, we have
\begin{equation} \label{eq-homs-lin-alg-count}
\hom(P_k,H) = \sum_{i,j} (A^k)_{ij} \leq |V(H)|^2\max_{i,j} (A^k)_{ij}.
\end{equation}
By the Perron-Frobenius theorem (see e.g. \cite[Theorem 1.5]{Seneta}), the largest eigenvalue $\lambda$ of $A$ has a strictly positive eigenvector ${\bf x}$, for which it holds that $A^k \textbf{x} = \lambda^k \textbf{x}$. Considering the row of $A^k$ containing $\max_{i,j} \left( A^k \right)_{ij}$, it follows that $\max_{i,j} \left( A^k \right)_{ij} \leq c_1 \lambda^k$ for some constant $c_1$. Combining this with (\ref{eq-homs-lin-alg-count}) we find that $\hom(P_k,H) \leq |V(H)|^2 c_1 \lambda^k$.
Since $H$ is not regular, we have $\lambda < \Delta$, and so the lemma follows.
\end{proof}

We use Lemma \ref{lem-no long paths} to show that any tree $T$ containing $P_k$ as a subgraph, for $k \geq \ell_H$, has $\hom(T,H) < |V_{=\Delta}|\Delta^{n-1}$. Indeed, by first coloring the path and then iteratively coloring the rest of the tree we obtain
\[
\hom(T,H) < \Delta^{k-2} \Delta^{n-k} < |V_{=\Delta}|\Delta^{n-1}.
\]
(The bound $\hom(P_k,H) < \Delta^{k-2}$ will be necessary for the proof of Theorem \ref{thm-2connected}; here  the weaker bound $\hom(P_k,H) < |V_{=\Delta}|\Delta^{k-1}$ would suffice.)

So suppose that $T$ does not contain a path of length $\ell_H$. 
Notice that for $n>2^{c+1}$ there are at most
\[
1+n^{1/(c+1)}+n^{2/(c+1)}+\cdots + n^{c/(c+1)} < n
\]
vertices in a rooted tree with depth at most $c$ and each vertex having degree at most $n^{1/(c+1)}$. Because of this, there exists a constant $c_2>0$ (which depends on $\ell_H$, but is independent of $n$) and 
a vertex $v \in T$ with $d(v) \geq n^{c_2}$. 
Since $T \neq K_{1,n-1}$ we have $d(v) < n-1$ and so some neighbor $w$ of $v$ is adjacent to a vertex in $V(T) \setminus (\{v\} \cup N(v))$.
As $H$ is connected and not regular, there is an $i \in V(H)$ with $d_H(i) = \Delta$ and $d_H(j) \leq \Delta-1$ for some neighbor $j$ of $i$.

The number of $H$-colorings of $T$ that don't color $v$ with a vertex of degree $\Delta$ is at most
\begin{equation} \label{eqn-cols1}
|V(H)| (\Delta-1)^{n^{c_2}}\Delta^{n-1-n^{c_2}} \leq |V(H)|e^{-n^{c_2}/\Delta}  \Delta^{n-1}.
\end{equation}
The number of $H$-colorings that color $v$ with a vertex of degree $\Delta$ different from $i$ is at most
\begin{equation} \label{eqn-cols2}
(|V_{=\Delta}| -1) \Delta^{n-1},
\end{equation}
and the number of $H$-colorings that color $v$ with $i$ and $w$ with a color different from $j$ is at most
\begin{equation} \label{eqn-cols3}
(\Delta-1)\Delta^{n-2} = \left(1-\frac{1}{\Delta}\right) \Delta^{n-1}.
\end{equation}
Finally, the number of $H$-colorings that color $v$ with $i$ and $w$ with $j$ is at most
\begin{equation} \label{eqn-cols4}
(\Delta-1)\Delta^{n-3} = \left(\frac{1}{\Delta} - \frac{1}{\Delta^2}\right) \Delta^{n-1},
\end{equation}
since the degree of $j$ is at most $\Delta-1$ and $w$ is adjacent to a vertex in $V(T) \setminus (\{v\} \cup N(v))$, which can be colored with one of the at most $\Delta-1$ neighbors of $j$.
Combining (\ref{eqn-cols1}), (\ref{eqn-cols2}), (\ref{eqn-cols3}), and (\ref{eqn-cols4}) we obtain
\[
\hom(T,H) \leq   \left( |V_{=\Delta}| - \frac{1}{\Delta^2} + |V(H)|e^{-n^{c_2}/\Delta} \right) \Delta^{n-1} < |V_{=\Delta}|\Delta^{n-1},
\]
the final inequality valid as long as $n \geq c_H$.


\section{Proof of Theorem \ref{thm-2connected}} \label{sec-2connected}

Here we build on the approach used in the proof of Theorem \ref{thm-maxfortrees} to tackle the family of $n$-vertex $2$-connected graphs $\calC_2(n)$.

In order to proceed, we will need a structural characterization of $2$-connected graphs. In the definition below we abuse standard notation a little, by allowing the endpoints of a path to perhaps coincide.
\begin{definition}
An \emph{ear} on a graph $G$ is a path whose endpoints are vertices of $G$, but which otherwise is vertex-disjoint from $G$. An ear is an \emph{open ear} 
if the endpoints of the path are distinct. 
An \emph{(open) ear decomposition} of a graph $G$ is a partition
of the edge set of $G$ into parts $Q_0, Q_1,\ldots Q_\ell$
such that $Q_0$ is a cycle and $Q_i$ for $1 \leq i \leq \ell$ is an (open) ear on $Q_0 \cup \cdots \cup Q_{i-1}$.
\end{definition}

\begin{theorem}[Whitney \cite{Whitney}]
A graph is $2$-connected if and only if it admits an open ear decomposition.
\end{theorem}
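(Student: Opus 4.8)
The plan is to prove both implications, handling the ``if'' direction by a short induction on the number of ears and the ``only if'' direction by greedily building an increasing sequence of $2$-connected subgraphs of $G$.

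For the ``if'' direction, suppose $G$ has an open ear decomposition $Q_0, Q_1, \ldots, Q_\ell$, and let $G_i = Q_0 \cup \cdots \cup Q_i$. I would argue by induction on $i$ that each $G_i$ is $2$-connected. The base case $G_0 = Q_0$ is a cycle, which is $2$-connected. For the inductive step, I would show that attaching an open ear $P$ with distinct endpoints $u, v \in V(G_{i-1})$ to a $2$-connected graph $G_{i-1}$ yields a $2$-connected graph. The only vertices whose deletion must be checked are: (i) an internal vertex $w$ of $P$, whose removal leaves $G_{i-1}$ intact and keeps the remaining internal vertices of $P$ attached to $G_{i-1}$ through $u$ or $v$; and (ii) a vertex $w \in V(G_{i-1})$, whose removal leaves $G_{i-1} - w$ connected (as $G_{i-1}$ is $2$-connected) and keeps the internal vertices of $P$ connected to it through whichever of $u, v$ survives. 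Here I use crucially that $u \neq v$, so at least one endpoint always remains.

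For the ``only if'' direction, suppose $G$ is $2$-connected. Since a $2$-connected graph contains a cycle, I choose $Q_0$ to be any cycle in $G$ and set $G_0 = Q_0$. Inductively, suppose $G_k \subsetneq G$ is a $2$-connected subgraph built from $Q_0, \ldots, Q_k$. I would find an open ear $Q_{k+1}$ of $G_k$ inside $G$ as follows. If some edge $e \in E(G) \setminus E(G_k)$ has both endpoints in $V(G_k)$, then $e$ itself is an open ear (a path of length one with distinct endpoints). Otherwise, since $G_k \neq G$ and $G$ is connected, there is an edge $xy \in E(G)$ with $x \in V(G_k)$ and $y \notin V(G_k)$. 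Because $G$ is $2$-connected, $G - x$ is connected, so there is a path from $y$ to $V(G_k)$ in $G - x$; truncating it at its first return to $G_k$ yields a path ending at some $z \in V(G_k)$ with $z \neq x$, internally disjoint from $G_k$. The walk $x, y, \ldots, z$ is then an open ear with distinct endpoints $x$ and $z$. Setting $G_{k+1} = G_k \cup Q_{k+1}$, the result is again $2$-connected by the ``if'' direction above, and I continue until $G_k = G$. The process terminates because each ear adds at least one new edge.

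The main obstacle is the extension step in the ``only if'' direction: guaranteeing that the ear produced is \emph{open}, i.e. has distinct endpoints. This is precisely where $2$-connectivity (as opposed to mere connectivity) is needed, via the observation that deleting the single vertex $x$ cannot disconnect $G$, which forces the path from $y$ to reenter $G_k$ at a vertex other than $x$. I would also take care to verify that once all vertices of $G$ lie in $V(G_k)$, every remaining edge is absorbed as a length-one open ear, so that the decomposition indeed partitions all of $E(G)$.
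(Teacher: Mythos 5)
Your proof is correct, but there is nothing in the paper to compare it against: the paper invokes this statement as a black box, citing Whitney's 1932 paper, and never proves it. What you have written is the standard textbook argument, and both directions are sound. In the ``if'' direction, your induction correctly isolates the two cases for a deleted vertex, and you use the openness of the ear (distinct endpoints $u \neq v$) exactly where it is needed --- if the endpoints were allowed to coincide, the common endpoint would be a cutvertex and the step would fail. In the ``only if'' direction, your greedy extension handles the two cases (a chord-type edge inside $V(G_k)$, or an edge leaving $V(G_k)$) correctly, and the key point --- that the path from $y$ back to $G_k$ can be taken in $G-x$, forcing reentry at some $z \neq x$ and hence an \emph{open} ear --- is precisely where $2$-connectivity, rather than mere connectivity, enters; this is the same distinction the paper itself relies on when contrasting Whitney's theorem (open ears, $2$-connected) with Robbins' theorem (arbitrary ears, $2$-edge-connected) in Section 5. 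Your termination argument and the observation that leftover edges are absorbed as length-one open ears, so that the ears partition $E(G)$ as the paper's definition requires, close the remaining gaps. One minor point worth making explicit: the base cycle $Q_0$ and the hypothesis that $G$ is simple guarantee all graphs involved have at least three vertices, so ``$2$-connected'' is well-defined throughout.
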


Since removing edges from a graph does not decrease the count of $H$-colorings, we will assume in the proof of Theorem \ref{thm-2connected} that we are working with a \emph{minimally $2$-connected graph} $G$, meaning that $G$ is $2$-connected and for any edge $e$ we have that $G-e$ is not $2$-connected. A characterization of these graphs is the following, which can be found in \cite{Dirac}.
\begin{theorem} \label{thm-nochord}
A $2$-connected graph $G$ is minimally $2$-connected if and only if no cycle in $G$ has a chord.
\end{theorem}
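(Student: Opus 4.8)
The plan is to prove both implications using the standard characterization that a graph on at least three vertices is 2-connected if and only if every pair of its vertices lies on a common cycle (equivalently, is joined by two internally disjoint paths). Throughout I would use that $G$ is simple, that a 2-connected graph is in particular 2-edge-connected (so deleting a single edge preserves connectedness), and that $G$ is minimally 2-connected exactly when $G-e$ fails to be 2-connected for every edge $e$.

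For the direction ``no chorded cycle $\Rightarrow$ minimally 2-connected'', I would argue by contradiction. Fix an edge $e=uv$ and suppose $G-e$ were still 2-connected. Since $G-e$ has the same vertex set as $G$ (in particular at least three vertices), the vertices $u$ and $v$ lie on a common cycle $C'$ of $G-e$. Because $G$ is simple, the only edge joining $u$ and $v$ is $e$ itself, which is absent from $G-e$; hence $u$ and $v$ are non-consecutive on $C'$, so $e$ is a chord of the cycle $C'$ in $G$, contradicting the hypothesis. Thus $G-e$ is not 2-connected for any $e$, and $G$ is minimally 2-connected.

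For the converse, taken in contrapositive form (``some chorded cycle $\Rightarrow$ not minimally 2-connected''), suppose a cycle $C$ has a chord $e=uv$; I would show $G-e$ remains 2-connected. The chord splits $C$ into two internally disjoint $u$--$v$ arcs $P_1,P_2$, both lying in $G-e$. It then suffices to rule out a cut vertex $z$ of $G-e$ (connectedness being automatic, as $G$ is 2-edge-connected). If $z\in\{u,v\}$, then deleting $z$ already deletes $e$, so $G-e-z=G-z$, which is connected because $G$ is 2-connected, so such a $z$ is not a cut vertex. If $z\notin\{u,v\}$, then $z$ is internal to at most one of $P_1,P_2$, so the other arc is a $u$--$v$ path in $G-e-z$, placing $u$ and $v$ in the same component. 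The key move is to observe that $G-z=(G-e-z)+e$: since $G-z$ is connected (as $G$ is 2-connected) whereas $z$ being a cut vertex makes $G-e-z$ disconnected, adding the single edge $e=uv$ must merge components, forcing $u$ and $v$ into distinct components of $G-e-z$ --- contradicting the arc just exhibited. Hence $G-e$ has no cut vertex and is 2-connected, so $G$ is not minimally 2-connected.

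I expect the main obstacle to be this converse direction, specifically the cut-vertex analysis: the argument hinges on the identity $G-z=(G-e-z)+e$ together with the two arcs of $C$, and on cleanly separating the case where the putative cut vertex is an endpoint of the chord from the case where it is not. The forward direction should be comparatively routine once the ``common cycle'' characterization of 2-connectivity is invoked.
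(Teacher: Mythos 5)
Your proof is correct, but there is nothing in the paper to compare it against: the paper states this theorem as a known result, citing Dirac \cite{Dirac}, and gives no proof of it. As a self-contained argument, yours is sound. The forward direction correctly combines the fact that any two vertices of a $2$-connected graph lie on a common cycle with the simplicity of $G$: if $G-e$ were still $2$-connected for $e=uv$, the endpoints $u,v$ would lie on a cycle of $G-e$ on which they cannot be consecutive (the only $uv$-edge being $e$), so $e$ is a chord of that cycle. The converse is also handled correctly: for a chord $e=uv$ of a cycle $C$, connectedness of $G-e$ follows from $2$-edge-connectivity of $G$, a putative cut vertex $z\in\{u,v\}$ is excluded since then $G-e-z=G-z$ is connected, and for $z\notin\{u,v\}$ the two internally disjoint arcs of $C$ guarantee that $u$ and $v$ lie in a common component of $G-e-z$, while the identity $G-z=(G-e-z)+e$ shows that connectedness of $G-z$ would force $u$ and $v$ into \emph{distinct} components of $G-e-z$ --- a contradiction, so $G-e$ is $2$-connected and $G$ is not minimally $2$-connected. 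The only ingredients you use beyond first principles are standard (the common-cycle characterization of $2$-connectivity and the fact that $2$-connected graphs on at least three vertices are $2$-edge-connected), so this is a legitimate elementary proof of the cited theorem.
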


\begin{corollary}\label{cor-P3}
Let $G$ be a minimally $2$-connected graph. There is an open ear decomposition $Q_0, \ldots, Q_\ell$ of $G$, and a $c$ satisfying $1 \leq c \leq \ell$, with the property that each of $Q_1$ up to $Q_c$ are paths on at least four vertices, and each of $Q_{c+1}$ through $Q_\ell$ are paths on three vertices with endpoints in $\cup_{k=1}^c Q_k$.
\end{corollary}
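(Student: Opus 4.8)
The plan is to start from an arbitrary open ear decomposition of $G$ (which exists by Whitney's theorem) and then reshape it, using chordlessness throughout. The first point is that minimality rules out single-edge ears: a one-edge ear joins two vertices already lying on a common cycle of the $2$-connected subgraph built so far, so it would be a chord of that cycle, contradicting Theorem~\ref{thm-nochord}. Hence every ear $Q_i$ with $i\ge 1$ is a path on at least three vertices, and I will call it \emph{long} if it has at least four vertices and \emph{short} (a $P_3$) otherwise. Two degree facts then drive everything. First, each endpoint of a short ear has degree at least $3$ in $G$, since it already lay in a $2$-connected subgraph (where its degree was at least $2$) and the ear adds one further incident edge; in particular, by Theorem~\ref{thm-nochord} no two vertices of degree at least $3$ are adjacent, so these vertices form an independent set. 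Second, a \emph{solitary} vertex, meaning a degree-$2$ vertex both of whose neighbours have degree at least $3$, is never the endpoint of any ear, so the $P_3$ carrying it can be deferred to the very end of the decomposition.

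Building on this, I would work with the thread structure of $G$: the maximal paths of degree-$2$ vertices joining two degree-$\ge 3$ vertices. Chordlessness forces every thread to have at least one internal vertex, the solitary vertices being exactly the internal vertices of the length-two threads. Let $R$ be the core obtained by deleting all solitary vertices. The goal is to show $R$ is $2$-connected; being an induced subgraph of $G$, it is automatically chordless, hence minimally $2$-connected, and all of its threads are long. A minimally $2$-connected graph all of whose threads are long admits an open ear decomposition $Q_0,\dots,Q_c$ in which each $Q_i$ ($i\ge 1$) runs along a complete thread and is therefore a path on at least four vertices. Finally I reinstate the solitary vertices as the ears $Q_{c+1},\dots,Q_\ell$: each is a $P_3$, and by the first degree fact its two endpoints have degree at least $3$, so they were never deleted and therefore lie in the core $Q_0\cup\cdots\cup Q_c$, as required.

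The main obstacle is precisely the claim that the core $R$ is $2$-connected and contains a genuine long ear, so that $c\ge 1$. Deleting solitary vertices can lower a degree-$3$ vertex to degree $2$ and thereby create \emph{new} solitary vertices, so the set of vertices realised as $P_3$-middles cannot just be taken to be all solitary vertices of $G$; it must be chosen to leave a $2$-connected, solitary-free core, and I would secure this by iterating the deletion to a fixed point, or more robustly by fixing at the outset the open ear decomposition that maximises the number of vertices lying on long ears and using Theorem~\ref{thm-nochord} to exclude the bad interactions. This is exactly what fails for $G=K_{2,m}$ (that is, the graphs $K_{2,n-2}$): there every degree-$2$ vertex is solitary, every ear is a $P_3$, no long ear survives, and $c=0$. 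These ``all short ear'' graphs are the true exceptions, and since they are precisely the extremal graphs of Theorem~\ref{thm-2connected} they must be singled out separately; the hypothesis $1\le c\le\ell$ records that we are in the remaining, generic case. (I would also read the attachment condition with $Q_0$ included in the core, since a degree-$3$ vertex of the base cycle may be an endpoint only of short ears.)
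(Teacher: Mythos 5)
There is a genuine gap, and it sits exactly where you flag it: the claim that the solitary-free core $R$ is $2$-connected. Neither of your proposed repairs works. Take $G$ to be two vertices $x,y$ joined by three internally disjoint paths, two with exactly one internal vertex each and one with four internal vertices. This $G$ is minimally $2$-connected, it is not a cycle and not a $K_{2,m}$, and its solitary vertices are precisely the two middles of the short paths; deleting them leaves the long $x$--$y$ path, which is not $2$-connected, and iterating the deletion produces no new solitary vertices, so the ``fixed point'' core is still a path. (The corollary itself is fine for this $G$: choose $Q_0$ to be the $4$-cycle formed by the two short threads and $Q_1$ the long thread, so $c=1$. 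The lesson is that the decomposition must be chosen globally --- in particular $Q_0$ must be chosen well --- and cannot be read off a vertex-deletion process.) A second, independent error: your ``degree fact'' that Theorem \ref{thm-nochord} makes the vertices of degree $\geq 3$ an independent set is false. Let $u\sim v$, join $u$ to a new vertex $w$ by two internally disjoint paths with one internal vertex each, and likewise $v$ to $w$. One checks every cycle of this graph is chordless, so it is minimally $2$-connected, yet $u\sim v$ with $d(u)=d(v)=3$. Hence threads with no internal vertex do exist, which breaks your claim that every thread is subdivided and also your lemma that an all-long-threads graph has a decomposition in which each ear ``runs along a complete thread'' (in general an ear must swallow several threads and interior branch vertices; consider a twice-subdivided Petersen graph, whose suppressed multigraph has no Hamiltonian cycle).

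For contrast, the paper's proof needs none of this machinery. It starts from an \emph{arbitrary} open ear decomposition, rules out two-vertex ears by minimality (your argument for this step is valid), and then proves the single claim that makes everything work: if $Q_i$ is an ear on exactly three vertices $x,a,y$, then the middle vertex $a$ has degree $2$ in $G$. The proof is a chord argument: if a later ear $Q_j$ attached at $a$ with other endpoint $z$, take a cycle $C$ through $x$ and $y$ in the earlier ears and a path from $z$ to $C$, and assemble a cycle through $a$, $z$ and $y$ for which $xa$ is a chord, contradicting Theorem \ref{thm-nochord}. Given this, nothing ever attaches to the middle of a short ear, so the short ears can simply be deferred to the end of the same decomposition. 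Note that your solitary-vertex observation is only the easy direction (ear endpoints have degree $\geq 3$ in $G$); what is needed, and what your write-up never establishes, is this converse statement about short-ear middles. Finally, your point about $K_{2,n-2}$ is a fair catch about the literal statement: there every ear is short, so no $c\geq 1$ exists (likewise $C_n$, where $\ell=0$). But the remedy is not to reinterpret $1\leq c\leq\ell$ as a hypothesis --- the corollary is an unconditional existence claim --- rather the statement should permit $c=0$ with attachments in $Q_0\cup\cdots\cup Q_c$, which is exactly the form in which it is applied in the proof of Theorem \ref{thm-2connected}.
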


\begin{proof}
Let $Q_0, \ldots, Q_\ell$ be an open ear decomposition of $G$. If any of the $Q_k$, $1 \leq k \leq \ell$, is a path on two vertices, its removal leads to an open ear decomposition of a proper spanning subgraph of $G$, contradicting the minimality of $G$. So we may assume that each of $Q_1$ through $Q_\ell$ is a path on at least three vertices. We claim that if $Q_i$ is a path on exactly three vertices, then the degree 2 vertex in $Q_i$ also has degree 2 in $G$; from this the corollary easily follows.

To verify the claim, let $Q_i$ be on vertices $x, a$ and $y$, with $a$ the vertex of degree 2, and suppose, for a contradiction, that there is some $Q_j$, $j > i$, that has endpoints $a$ and $z$ (the latter of which may be one of $x$, $y$). In $\cup_{k=1}^{i-1} Q_k$ there is a cycle $C$ containing both $x$ and $y$, and in  $\cup_{k=1}^{j-1} Q_k$ there is a path $P$ from $z$ to $C$ that intersects $C$ only once. Now consider the cycle $C'$ that starts at $a$, follows $Q_j$ to $z$, follows $P$ to $C$, follows $C$ until it has met both $x$ and $y$ (meeting $y$ second, without loss of generality), and finishes along the edge $ya$. The edge $xa$ is a chord of this cycle, giving us the desired contradiction.
\end{proof}

The next lemma follows from results that appear in \cite{Engbers}, specifically Lemma 5.3 and the proof of Corollary 5.4 of that reference.

\begin{lemma}\label{lem-col paths}
Suppose $H$ is not $K_{\Delta,\Delta}$ or $K_{\Delta}^{\circ}$ (the complete looped graph). Then for any two vertices $i$, $j$ of $H$ and for $k\geq 4$ there are at most $(\Delta^2-1)\Delta^{k-4}$ $H$-colorings of $P_k$ that map the initial vertex of the path to $i$ and the terminal vertex to $j$.
\end{lemma}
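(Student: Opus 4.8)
The plan is to recast the count as a walk count and then reduce everything to the case $k=4$. Writing $A$ for the adjacency matrix of $H$, an $H$-coloring of $P_k$ sending the initial vertex to $i$ and the terminal vertex to $j$ is exactly a walk of length $k-1$ in $H$ from $i$ to $j$, so the quantity to be bounded is $(A^{k-1})_{ij}$. It suffices to treat connected $H$, which is the case arising in the application: a walk between $i$ and $j$ stays inside their common component. I will establish two things, a base estimate $(A^3)_{ij}\le \Delta^2-1$ valid for all $i,j$, and an elementary extension step that propagates it to every $k\ge 4$.

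The extension step is routine. For $k\ge 4$ I split a walk of length $k-1$ from $i$ to $j$ into its first $k-4$ steps and its last three, which gives
\[
(A^{k-1})_{ij}=\sum_{u}(A^{k-4})_{iu}\,(A^{3})_{uj}\le (\Delta^2-1)\sum_{u}(A^{k-4})_{iu}\le (\Delta^2-1)\Delta^{k-4}.
\]
The first inequality applies the base estimate to every pair $(u,j)$, and the second uses that the number of walks of length $k-4$ starting at $i$ (the $i$-th row sum of $A^{k-4}$) is at most $\Delta^{k-4}$, since each step offers at most $\Delta$ choices. This is precisely the claimed bound.

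The heart of the matter, and the step I expect to be the main obstacle, is the base estimate $(A^3)_{ij}\le\Delta^2-1$. Counting a walk $i\sim a\sim b\sim j$ by its first step yields
\[
(A^3)_{ij}=\sum_{a\sim i}|N(a)\cap N(j)|\le d(i)\cdot \Delta\le \Delta^2,
\]
so I only need to rule out equality under the hypothesis that $H$ is neither $K_{\Delta}^{\circ}$ nor $K_{\Delta,\Delta}$. Equality forces $d(i)=\Delta$ together with $|N(a)\cap N(j)|=\Delta$ for every neighbor $a$ of $i$; since $|N(a)|,|N(j)|\le\Delta$, this forces $N(a)=N(j)=:B$ with $|B|=\Delta$ for every $a$ in $A_0:=N(i)$, where $|A_0|=\Delta$.

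The remaining task is to show this rigid configuration forces one of the two excluded graphs. I first note that each $b\in B$ is adjacent to every $a\in A_0$ (because $N(a)=B\ni b$), so $A_0\subseteq N(b)$ and hence $N(b)=A_0$ for all $b\in B$. Now two cases arise. If $A_0\cap B=\emptyset$, then every vertex of $A_0$ has all its neighbors in $B$ and vice versa, no vertex carries a loop, no edge leaves $A_0\cup B$, and connectivity forces $H=K_{\Delta,\Delta}$. If instead some $v\in A_0\cap B$ exists, then $N(v)=B$ and $N(v)=A_0$ give $A_0=B=:S$, so every vertex of $S$ has neighborhood $S$ (hence a loop and joins to all of $S$), no edge leaves $S$, and connectivity forces $H=K_{\Delta}^{\circ}$. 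Either outcome contradicts the hypothesis, giving $(A^3)_{ij}\le\Delta^2-1$ and completing the plan. The one point demanding care is this closure-and-connectivity argument, which must correctly exhaust the equality case and pin down exactly the two graphs excluded in the statement.
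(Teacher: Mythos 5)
Your proof is correct, and the comparison here is somewhat unusual: the paper does not actually prove this lemma at all. It imports it from \cite{Engbers} (Lemma 5.3 there, together with the proof of Corollary 5.4 of that reference), adding only a remark that the argument in the cited source delivers the bound $(\Delta^2-1)\Delta^{k-4}$ for all $H$ other than $K_{\Delta,\Delta}$ and $K_{\Delta}^{\circ}$. So your write-up supplies a self-contained verification of something the paper outsources. Your structure --- identify the count with $(A^{k-1})_{ij}$, prove the base estimate $(A^3)_{ij}\le \Delta^2-1$ via $(A^3)_{ij}=\sum_{a\sim i}|N(a)\cap N(j)|$ together with an equality analysis, then extend to all $k\ge 4$ by factoring $A^{k-1}=A^{k-4}A^3$ and bounding row sums by $\Delta^{k-4}$ --- is the natural one and very likely parallels the cited argument. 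The equality analysis is the delicate part and you carry it out correctly: equality forces $N(a)=N(j)=B$ with $|B|=\Delta$ for every $a\in N(i)$, hence $N(b)=N(i)$ for every $b\in B$, and the two cases $N(i)\cap B=\emptyset$ and $N(i)\cap B\neq\emptyset$ yield exactly $K_{\Delta,\Delta}$ and $K_{\Delta}^{\circ}$ respectively, \emph{provided} $H$ is connected.

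That proviso is the one point to flag. Your proposed reduction to connected $H$ (``a walk between $i$ and $j$ stays inside their common component'') is not a valid reduction: the component containing $i$ and $j$ could itself be $K_{\Delta,\Delta}$ or $K_{\Delta}^{\circ}$ even though $H$ as a whole is neither, and then there is nothing to apply. In fact the lemma as literally stated is false for such disconnected $H$: take $H$ to be the disjoint union of $K_{\Delta,\Delta}$ and one isolated vertex (so $H\neq K_{\Delta,\Delta}$ and $H\neq K_{\Delta}^{\circ}$), take $i$ and $j$ in the same side of the bipartite component, and take $k$ odd; the number of $H$-colorings of $P_k$ sending the ends to $i$ and $j$ is then $\Delta^{k-2}$, which exceeds $(\Delta^2-1)\Delta^{k-4}$. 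So connectivity of $H$ (or a hypothesis excluding components of maximum degree $\Delta$ isomorphic to the two forbidden graphs) must be read into the statement; this is a defect of the statement rather than of your argument. Your other justification is the operative one: in every use of the lemma in the paper (Theorems \ref{thm-2connected} and \ref{thm-2conn-cycles}) the graph $H$ is connected, so your proof covers exactly the cases the paper needs.
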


\begin{remark}
Corollary 5.4 in \cite{Engbers} gives a bound of $\Delta^{k-2}$ for a smaller class of $H$, which is simply for convenience.  The proof given actually delivers a bound of $(\Delta^2-1)\Delta^{k-4}$ for all $H$ except $K_{\Delta,\Delta}$ and $K_{\Delta}^{\circ}$. 
\end{remark}

\begin{proof}[Proof of Theorem \ref{thm-2connected}]

We begin with the proof for a non-regular connected graph $H$, and then consider regular $H$ and describe the necessary modifications needed in the proof.

Let $H$ be non-regular and connected. We will first show that for all sufficiently large $n$ and all minimally $2$-connected graphs $G$ that are different from $K_{2,n-2}$ we have $\hom(G,H) < \hom(K_{2,n-2},H)$, and we will then address the cases of equality when $G$ is allowed to be not minimal.
An easy lower bound on the number of $H$-colorings of $K_{2,n-2}$ is
$$
\hom(K_{2,n-2},H) \geq s(H)\Delta^{n-2} \geq \Delta^{n-2},
$$
where $s(H)$ is the number of ordered pairs $(i,j)$ of (not necessarily distinct) vertices in $V(H)$ with the property that $|N(i) \cap N(j)|=\Delta$. For the first inequality, consider coloring the vertices in the partition class of size $2$ of $K_{2,n-2}$ with colors $i$ and $j$, and for the second note that the pair $(i,i)$, where $i$ is any vertex of degree $\Delta$, is counted by $s(H)$.

Suppose that $G$ contains a path on $k\geq \ell_H$ vertices (with $\ell_H$ as given by Lemma \ref{lem-no long paths}). By first coloring this path and then iteratively coloring the remaining vertices we have (using Lemma \ref{lem-no long paths}) $\hom(G,H) < \Delta^{k-2} \Delta^{n-k} = \Delta^{n-2}$. We may therefore assume that $G$ contains no path on $k \geq \ell_H$ vertices.

Consider now an open ear decomposition of $G$ satisfying the conclusions of Corollary \ref{cor-P3}. Since $G$ contains no path on $\ell_H$ vertices, we have that there is some constant (independent of $n$) that bounds the lengths of each of the $Q_i$, and so $\ell$, the number of open ears in the decomposition, satisfies $\ell=\Omega(n)$.
We now show that $c$, the number of paths in the open ear decomposition that have at least four vertices, may be taken to be at most a constant (independent of $n$). Coloring $Q_0$ first, then coloring each of $Q_1$ through $Q_{c}$, and then iteratively coloring the rest of $G$, Lemma \ref{lem-col paths} yields
\begin{eqnarray*}
\hom(G,H) & \leq & |V(H)|\Delta^{|Q_0|-1}(\Delta^2-1)^{c} \Delta^{\left(\sum_{i=1}^{c} |Q_i|\right)-4c}\Delta^{\ell-c} \\
& \leq & \frac{|V(H)|}{\Delta}\left(1-\frac{1}{\Delta^2}\right)^{c} \Delta^n
\end{eqnarray*}
(noting that $n=|Q_0| + \left(\left(\sum_{i=1}^{c} |Q_i|\right)-2c\right) + (\ell-c)$). Unless $c$ is a constant, this quantity falls below the trivial lower bound on $\hom(K_{2,n-2},H)$ for all sufficiently large $n$.

Now since there are only constantly many vertices in the graph $G'$ with open ear decomposition $Q_0, \ldots, Q_{c}$, and $G$ is obtained by adding the remaining vertices to $G'$, each joined to exactly two vertices of $G'$, it follows by the pigeonhole principle that for some constant $c'$ 
there is a pair of vertices $w_1$, $w_2$ in $G$ with at least $c' n$ common neighbors, all among the middle vertices of the $Q_i$ for $i \geq {c}+1$.
(Notice that this makes $G$ ``close'' to $K_{2,n-2}$ in the same sense that in the proof of Theorem \ref{thm-maxfortrees} a tree with a vertex of degree $\Omega(n)$ was ``close'' to $K_{1,n-1}$.)

We count the number of $H$-colorings of $G$ by first considering those in which $w_1$ is colored $i$ and $w_2$ is colored $j$, for some pair $i, j \in V(H)$ with $|N(i) \cap N(j)| < \Delta$. There are at most
\begin{equation} \label{small-common-nhood}
|V(H)|^2(\Delta-1)^{c' n}\Delta^{n-2-c' n}
\end{equation}
such $H$-colorings. Next, we count the number of $H$-colorings of $G$ in which $w_1$ is colored $i$ and $w_2$ is colored $j$, for some pair $i,j \in V(H)$ with $|N(i) \cap N(j)| = \Delta$. We argue that in $G'$ (the graph with open ear decomposition $Q_0, \ldots, Q_{c}$) there must be a path $P$ on at least $4$ vertices with endpoints $w_1$ and $w_2$. 
To see this, note that since $G \neq K_{2,n-2}$ there must be some vertex $v\in G'$ so that $v \neq w_1$, $v \neq w_2$, and $v \notin N(w_1) \cap N(w_2)$. Choose a cycle in $G'$ containing $v$ and $w_1$, and find a path from $w_2$ to that cycle (the path will be trivial if $w_2$ is on the cycle). From this structure we find such a path $P$, and it must have at least $4$ vertices as $v \notin N(w_1) \cap N(w_2)$.

Coloring $G$ by first coloring $w_1$ and $w_2$, then the vertices of $P$, and finally the rest of the graph, we find by Lemma \ref{lem-col paths} that the number of
$H$-colorings of $G$ in which $w_1$ is colored $i$ and $w_2$ is colored $j$, for some pair $i,j \in V(H)$ with $|N(i) \cap N(j)| = \Delta$ is at most
\begin{equation} \label{large-common-nhood}
s(H)\left(1-\frac{1}{\Delta^2}\right)\Delta^{n-2}.
\end{equation}
Combining (\ref{small-common-nhood}) and (\ref{large-common-nhood}) we have
\begin{eqnarray*}
\hom(G,H) &\leq& |V(H)|^2\Delta^{n-2}\left(\frac{\Delta-1}{\Delta}\right)^{c' n} + s(H)\left(1-\frac{1}{\Delta^2}\right) \Delta^{n-2}\\
&<& s(H)\Delta^{n-2}\\
&\leq& \hom(K_{2,n-2},H),
\end{eqnarray*}
with the strict inequality valid for all sufficiently large $n$.

\medskip

We have shown that for non-regular connected $H$, $K_{2,n-2}$ is the unique minimally $2$-connected $n$-vertex graph maximizing the number of $H$-colorings. We now complete the proof of Theorem \ref{thm-2connected} for non-regular connected $H$ by showing that if 
$G$ is an $n$-vertex $2$-connected graph that is not minimally $2$-connected then $\hom(G,H) < \hom(K_{2,n-2},H)$.  Suppose for the sake of contradiction that $\hom(G,H) = \hom(K_{2,n-2},H)$. Since deleting edges from $G$ cannot decrease the count of $H$-colorings, the uniqueness of $K_{2,n-2}$ among minimally $2$-connected graphs shows that $K_{2,n-2}$ is the only minimally $2$-connected graph that is a subgraph of $G$, and in particular we may assume that $G$ is obtained from $K_{2,n-2}$ by adding a single edge, necessarily inside one partition class of $K_{2,n-2}$. Since $K_{2,n-2}$ is a subgraph of $G$, all $H$-colorings of $G$ are $H$-colorings of $K_{2,n-2}$.

Suppose that $i$ and $j$ are distinct adjacent vertices of $H$. The $H$-coloring of $K_{2,n-2}$ that maps one partition class to $i$ and the other to $j$ must also be an $H$-coloring of $G$, which implies that both $i$ and $j$ must be looped. By similar reasoning, if $i$ and $j$ are adjacent in $H$, and also $j$ and $k$, then $i$ and $k$ must be adjacent. It follows that $H$ is a disjoint union of fully looped complete graphs, and so if connected must be a single complete looped graph (and therefore is regular), which is a contradiction.     

\medskip

Now we turn to regular connected $H$ satisfying $s(H) \geq 2\Delta^2+1$ (if $H$ is loopless and bipartite) or $s(H) \geq \Delta^2+1$ (otherwise). For these $H$, Lemma \ref{lem-no long paths} does not apply. We will argue, however, that there is still an $\ell_H$ such that if $2$-connected $n$-vertex $G$ has an open ear decomposition in which any of the added paths $Q_i$, $i \geq 1$, has at least $\ell_H$ vertices then $G$ has fewer $H$-colorings than $K_{2,n-2}$. Once we have established this, the proof proceeds exactly as in the non-regular case.

We begin by establishing that $G$ has no long cycles; we will use the easy fact that the number of $H$-colorings of a $k$-cycle is the sum of the $k$th powers of the eigenvalues of the adjacency matrix of $H$. By the Perron-Frobenius theorem there is exactly one eigenvalue of the adjacency matrix of $H$ equal to $\Delta$, and a second one equal to $-\Delta$ if and only if $H$ is loopless and bipartite, with the remaining eigenvalues having absolute value strictly less than $\Delta$. It follows that the number of $H$-colorings of a $k$-cycle is, for large enough $k$, less than $(2+\frac{1}{\Delta^2})\Delta^k$ (if $H$ is loopless and bipartite) or $(1+\frac{1}{\Delta^2})\Delta^k$ (otherwise). If $G$ has such a cycle then by coloring the cycle first and then coloring the remaining vertices sequentially, bounding the number of options for the color of each vertex by $\Delta$, we get that the number of $H$-colorings of $G$ falls below the trivial lower bound on ${\rm hom}(K_{2,n-2},H)$ of $s(H)\Delta^{n-2}$.

Now if a $2$-connected $G$ has an open ear decomposition with an added path $Q_i$, $i \geq 1$, on at least $\ell_H$ vertices, then it has a cycle of length at least $\ell_H$, since the endpoints of $Q_i$ are joined by a path in $Q_0\cup \ldots \cup Q_{i-1}$.

\end{proof}

\section{Proof of Theorem \ref{thm-2conn-cycles}} \label{sec-2conn-cycles}

In this section we show that for all $q \geq 3$ and $n \geq 3$, among $n$-vertex $2$-edge-connected graphs, and therefore among the subfamily of $n$-vertex $2$-connected graphs, the cycle $C_n$ uniquely (up to one small exception) admits the greatest number of proper $q$-colorings (that is,  $K_q$-colorings). The result is trivial for $n=3$ and easily verified directly for $n=4$, so throughout we assume $n \geq 5$.

We will use the following characterization of $2$-edge-connected graphs, due to Robbins (see Section \ref{sec-2connected} for the definition of ear decomposition).

\begin{theorem}[Robbins \cite{Robbins}]
A graph is $2$-edge-connected if and only if it admits an ear decomposition.
\end{theorem}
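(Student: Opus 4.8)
The plan is to prove the two directions separately, using throughout the standard fact that an edge of a connected graph is a bridge (cut edge) if and only if it lies on no cycle.

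For the ``if'' direction (an ear decomposition yields a $2$-edge-connected graph), I would induct on the number of ears $\ell$. The base graph $Q_0$ is a cycle, which is $2$-edge-connected. For the inductive step, suppose $G_{i-1} = Q_0 \cup \cdots \cup Q_{i-1}$ is $2$-edge-connected, and let $Q_i$ be an ear with endpoints $u$ and $v$ in $V(G_{i-1})$ (possibly $u = v$). The new graph $G_i$ is clearly connected. To see that it is bridgeless, note that every edge of $G_{i-1}$ still lies on the same cycle it lay on before, and every edge of $Q_i$ lies on a cycle: if $u = v$ the ear is itself a cycle, while if $u \neq v$ then concatenating $Q_i$ with any $u$--$v$ path in the connected graph $G_{i-1}$ produces a cycle through all edges of $Q_i$. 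Hence no edge of $G_i$ is a bridge, so $G_i$ is $2$-edge-connected, completing the induction.

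For the ``only if'' direction I would build an ear decomposition greedily. Since $G$ is $2$-edge-connected and loopless, every vertex has degree at least $2$ (a vertex of degree at most $1$ would be isolated or incident to a bridge), so $G$ contains a cycle; take this to be $Q_0$. Maintaining the invariant that $G_i = Q_0 \cup \cdots \cup Q_i$ is a subgraph assembled from ears, I repeat the following while $E(G_i) \neq E(G)$. If $V(G_i) = V(G)$, pick any unused edge $xy$; both endpoints lie in $V(G_i)$, so $\{xy\}$ is an open ear (a path on two vertices) and I set $Q_{i+1} = \{xy\}$. Otherwise some vertex lies outside $V(G_i)$, and connectivity of $G$ supplies an unused edge $e = xy$ with $x \in V(G_i)$ and $y \notin V(G_i)$. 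As $G$ is bridgeless, $e$ lies on a cycle $C$; starting at $x$ and traversing $C$ through $y$, I stop at the first vertex $z$ that lies back in $V(G_i)$ and let $Q_{i+1}$ be the path $P$ so traversed. Each step adds a vertex (or, in the spanning case, an edge), so the process terminates with an ear decomposition of $G$.

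The crux — and the step I expect to require the most care — is verifying that the path $P$ produced in the non-spanning case is a legitimate ear all of whose edges are previously unused. By the ``first return'' stopping rule every internal vertex of $P$ lies outside $V(G_i)$, so each edge of $P$ has at least one endpoint outside $V(G_i)$ and hence cannot belong to $E(G_i)$, which consists only of edges with both endpoints in $V(G_i)$; thus $P$ contributes only new edges and is internally disjoint from $G_i$. Its endpoints $x$ and $z$ both lie in $V(G_i)$, so $P$ is an ear — open if $z \neq x$ and closed if $z = x$, both permitted in a (not necessarily open) ear decomposition. This is exactly where $2$-edge-connectivity is used essentially: without the bridgeless hypothesis the edge $e$ need not lie on any cycle, and the extension back into $G_i$ could fail.
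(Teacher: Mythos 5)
Your proof is correct, but there is nothing in the paper to compare it against: the paper states this result as a classical theorem attributed to Robbins, with a citation in place of a proof. What you have written is essentially the standard textbook argument. The ``if'' direction is sound: you correctly reduce $2$-edge-connectivity to ``connected and bridgeless,'' use the characterization of a bridge as an edge on no cycle, and observe that a closed ear is itself a cycle while an open ear $Q_i$ together with a $u$--$v$ path in $G_{i-1}$ forms a cycle (the two paths being internally disjoint since the ear meets $G_{i-1}$ only at its endpoints). The ``only if'' direction is also complete: minimum degree at least $2$ gives the initial cycle $Q_0$; in the non-spanning case an edge across the cut from $V(G_i)$ is automatically unused, and your ``first return'' stopping rule along a cycle through that edge correctly guarantees that every edge of the extracted path has an endpoint outside $V(G_i)$, hence is new, and that the path is internally disjoint from $G_i$ --- and you rightly identify this as the one place where bridgelessness is essential. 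Two minor points you could make explicit but which do not affect correctness: the process terminates with $V(G_i)=V(G)$ because a $2$-edge-connected graph has no isolated vertices, so exhausting the edge set exhausts the vertex set; and the single-edge ears used in the spanning phase are legitimate open ears precisely because $G$ is loopless, so the two endpoints of an edge are distinct. Note also that your proof produces a general (not necessarily open) ear decomposition with closed ears permitted, which is exactly what Robbins' theorem asserts --- in contrast with Whitney's theorem for $2$-connected graphs, quoted earlier in the paper, where openness of every ear is required and your greedy argument would need modification.
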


Notice that we are trying to show that $C_n$, which is the unique graph constructed with a trivial ear decomposition $Q_0=C_n$, maximizes $\hom(G,K_q)$. To do this, we aim to show that almost any time a $2$-edge-connected graph contains a cycle and a path that meets the cycle only at the endpoints (which is what results after the first added ear in an ear decomposition for $G \neq C_n$), we can produce an upper bound on $\hom(G,K_q)$ which is smaller than $\hom(C_n,K_q)$; we will then deal with the exceptional cases by hand. The proof will initially consider an arbitrary fixed cycle and a path joined to that cycle at its endpoints in $G$ (but independent of a particular ear decomposition of $G$), and will begin by producing an upper bound on $\hom(G,K_q)$ based on the lengths of the path and cycle. 

Let $G \neq C_n$ be an $n$-vertex $2$-edge-connected graph, and let $C$ be a non-Hamiltonian cycle in $G$ of length $\ell$.
Since $\ell < n$ and $G$ is $2$-edge-connected, there is a path $P$ on at least $3$ vertices that meets $C$ only at its endpoints. 

Suppose first that $P$ contains $m+2$ vertices, where $m \geq 2$ (so there are $m$ vertices on $P$ outside of the vertices of $C$). Color $G$ by first coloring $C$, then $P$, and then the rest of the graph. Using  $\hom(C_{\ell},K_q) = (q-1)^{\ell} + (-1)^{\ell} (q-1)$ and Lemma \ref{lem-col paths}, we have
\begin{eqnarray*}
\hom(G,K_q) &\leq& \left((q-1)^{\ell} + (-1)^{\ell} (q-1)\right) \left[ \left( (q-1)^{2} -1\right) (q-1)^{m-2} \right] (q-1)^{n-m-\ell}\\
&=& (q-1)^n - (q-1)^{n-2} + (-1)^{\ell} (q-1)^{n-\ell+1} - (-1)^{\ell} (q-1)^{n-\ell-1}\\
&\leq& (q-1)^n - (q-1)^{n-2} + (q-1)^{n-\ell+1} - (q-1)^{n-\ell-1}\\
&\leq& \hom(C_n,K_q).
\end{eqnarray*}
Since the last inequality is strict for $\ell > 3$ and the second-to-last inequality is strict for all odd $\ell$, the chain of inequalities is strict for all $\ell \geq 3$. 
In other words, if $G$ contains any cycle $C$ and a path $P$ on at least $4$ vertices that meets $C$ only at its endpoints, then $\hom(G,K_q) < \hom(C_n,K_q)$.

Suppose next that all the paths on at least three vertices that meet $C$ only at their endpoints have exactly three vertices. If $\ell \geq 5$, or if $\ell=4$ and there is such a path that joins two adjacent vertices of the cycle, then using just the vertices and edges of $C$ and one such path $P$ we can easily find a new cycle $C'$ 
and a path $P'$ on at least four vertices that only meets $C'$ at it endpoints, and so $\hom(G,K_q) < \hom(C_n,K_q)$ as before.

It remains to find an upper bound for those $2$-edge-connected graphs (apart from $C_n$) that do not have any of the following:
\begin{itemize}
\item[(a)] a cycle of any length with a path on at least $4$ vertices that only meets the cycle at its endpoints;
\item[(b)] a cycle of length at least $5$ and a path on $3$ vertices that only meets the cycle at its endpoints; or
\item[(c)] a cycle of length $4$ with a path of length $3$ that only meets the cycle at its endpoints, and those endpoints are two adjacent vertices of the cycle.
\end{itemize}

Recall that we are assuming $n \geq 5$. For the remaining graphs, notice that if $Q_0$ is a cycle of length $3$, then $Q_0 \cup Q_1$, where $Q_1$ is a path on three vertices (by (a) above), must contain a cycle of length $4$. 
In fact, in this case this cycle will use vertices from both $Q_0$ and $Q_1$.
Therefore, we need not separately analyze the situation where $C$ is a cycle of length $3$ among the remaining graphs. 
In particular, this means that there is one final case to consider: $C$ has length $\ell=4$, and $P$ is a path on $3$ vertices that only meets $C$ at its endpoints, and the endpoints of $P$ are non-adjacent vertices of $C$.  In this case, $C$ and $P$ form a copy of $K_{2,3}$, and we color this copy of $K_{2,3}$ first and then the rest of $G$.  This gives 
$$
\hom(G,K_q) \leq \left( q(q-1)^3 + q(q-1)(q-2)^3\right) (q-1)^{n-5}.
$$
For $n \geq 6$, some algebra shows that the right-hand side above is strictly less than $\hom(C_n,K_q)$, and also for $n=5$ and $q > 3$. For $n=5$ and $q=3$ we have equality, and so in this case $\hom(G,K_3) \leq \hom(C_5,K_3)$ with equality only if $G$ has the same number of proper $3$-colorings as $K_{2,3}$ and has $K_{2,3}$ as a subgraph; this can only happen if $G=K_{2,3}$ (using a similar argument as the one given in the proof of the cases of equality for Theorem \ref{thm-2connected}).

\section{Concluding Remarks}\label{sec-concluding remarks}
In this section, we highlight a few questions related to the work in this article.

\begin{question}
Which graphs $H$ have the property that the path on $n$ vertices is the tree that minimizes the number of $H$-colorings?
\end{question}

Since the star maximizes the number of $H$-colorings among trees, it is natural to repeat the maximization question among trees with some prescribed bound on the maximum degree.  Some results (for strongly biregular $H$) are obtained in \cite{Ray}.

\begin{question}
For a fixed non-regular $H$ and positive integer $\Delta$, which tree on $n$ vertices with maximum degree $\Delta$ has the most $H$-colorings?
\end{question}

There are still open questions about regular $H$ in the family of $2$-connected graphs. In particular, we have seen that $K_{2,n-2}$ and $C_n$ are $2$-connected graphs that have the most $H$-colorings for some $H$. 

\begin{question}
What are the necessary and sufficient conditions on $H$ so that $K_{2,n-2}$ is the unique 2-connected graph that has the most $H$-colorings?
\end{question}

\begin{question}
Are $C_n$ and $K_{2,n-2}$ the only $2$-connected graphs that uniquely maximize the number of $H$-colorings for some $H$?
\end{question}

Finally, it would be natural to try to extend these results to $k$-connected graphs and to $k$-edge-connected graphs. 

\begin{question}\label{q-kconn}
Let $H$ be fixed.  Which $k$-connected graph has the most $H$-colorings? Which $k$-edge-connected graph has the most number of $H$-colorings?
\end{question}
Some results related to Question \ref{q-kconn} for graphs with fixed minimum degree $\delta$ can be found in \cite{Engbers}, with $K_{\delta,n-\delta}$ shown to be the graph with the most $H$-colorings in this family. We remark that family of connected graphs with fixed minimum degree $\delta$ has been considered very recently in \cite{Engbers1}, where it is shown that for connected non-regular $H$ and all large enough $n$, again $K_{\delta,n-\delta}$ is the unique maximizer of the count of $H$-colorings. The question for regular $H$ remains fairly open.


\end{document}